\documentclass[11pt]{amsart}

\usepackage{amsmath}
\usepackage{amsfonts}
\usepackage{amssymb}
\usepackage{amsthm}
\usepackage{indentfirst}
\usepackage[T1]{fontenc}

\usepackage{hyperref}

\usepackage[usenames]{color}

\usepackage{tikz}
\usepackage{verbatim}
\usetikzlibrary{matrix}

\usepackage{enumerate}

\hypersetup{
    colorlinks   = true,
      citecolor    = red
}

\hypersetup{linkcolor=red}

\hypersetup{linkcolor=blue}

\newtheorem{theorem}{Theorem}[section]
\newtheorem{thm}[theorem]{Theorem}
\newtheorem{lem}[theorem]{Lemma}
\newtheorem{prop}[theorem]{Proposition}

\newtheorem{defn}[theorem]{Definition}

\newcommand{\bG}{{\mathbb{G}}}

\newcommand{\A}{{\mathbb{A}}}

\newcommand{\N}{{\mathbb{N}}}
\newcommand{\Q}{{\mathbb{Q}}}

\newcommand{\Z}{{\mathbb{Z}}}
\newcommand{\cG}{{\mathcal{G}}}
\newcommand{\cO}{{\mathcal{O}}}
\newcommand{\cU}{{\mathcal{U}}}
\newcommand{\SL}{\operatorname{SL}}

\setcounter{section}{-1}
\begin{document}

\begin{abstract}
In this paper, we formulate an analogue of Waring's problem for an algebraic group $G$.
At the field level we consider a morphism of varieties $f\colon \A^1\to G$
and ask whether every element of $G(K)$ is the product of a bounded number of elements
$f(\A^1(K)) = f(K)$.  We give an affirmative answer when $G$ is unipotent and $K$ is a characteristic zero field
which is not formally real.  

The idea is the same at the integral level, except one must work with schemes,
and the question is whether every element in a finite index subgroup of $G(\cO)$ can be written as a product of 
a bounded number of elements of $f(\cO)$.  We prove this is the case when $G$ is unipotent and $\cO$ is
the ring of integers of a totally imaginary number field.
\end{abstract}

\title{Waring's problem for unipotent algebraic groups}

\author{Michael Larsen}

\address{Department of Mathematics \\
          Indiana University \\
Bloomington, Indiana 47405, USA}

\email{\href{mailto:mjlarsen@indiana.edu}{\tt mjlarsen@indiana.edu}}

\author{Dong Quan Ngoc Nguyen}

\address{Department of Applied and Computational Mathematics and Statistics \\
         University of Notre Dame \\
         Notre Dame, Indiana 46556, USA }

\email{\href{mailto:dongquan.ngoc.nguyen@nd.edu}{\tt dongquan.ngoc.nguyen@nd.edu}}

%\date{February 19, 2017}

\thanks{ML was partially supported by NSF Grant DMS-1401419.}

\maketitle

\section{Introduction}

The original version of Waring's problem asks whether, for every positive integer $n$ there exists $M:= M_n$ such that every non-negative integer is of the form $a_1^n+\cdots+a_M^n$, $a_i\in \N$, and, if so, what is the minimum value for $M_n$.  Since 1909, when Hilbert proved that such a bound exists, an enormous literature has developed, largely devoted to determining $M_n$.  There is also a substantial literature devoted to variants of Waring's problem.  Kamke proved \cite{Ka} a generalization of the theorem in which $n$th powers are replaced by general polynomials.  
In a series of papers \cite{W1,W2,W3}, Wooley solved Waring's problem for vector-valued polynomials.
Siegel  \cite{Siegel,Siegel1} treated the case of rings of integers in number fields, and since then, many papers have analyzed Waring's problem for a wide variety of rings, for instance \cite{Bi, Ca, Vo, GV, LW, Ch, Ellison}.   
Also, there has been a flurry of recent activity on ``Waring's problem for groups''; the typical problem here is to prove that every element in $G$  is a product of a small number of $n$th powers of elements of $G$ (see, for instance, \cite{Sh,LST,AGKS,GT} and the references therein.)

This paper explores the view that algebraic groups are the natural setting for Waring's problem.  To this extent, it resembles the work on Waring's problem for groups of Lie type.  The work on the polynomial-valued and vector-valued variants of Waring's problem also fit naturally in this framework.  We will consider morphisms of varieties (resp. schemes) $f\colon \A^1\to G$ defined over  a field (resp. a number ring) and look at bounded generation of the groups generated by the images.  
%Our target groups will be unipotent.  

The strategy is developed in \S2 for unipotent algebraic groups over fields of characteristic $0$ which are not formally real.  (Some justification for concentrating on the unipotent case is given in Lemma~\ref{non-generating} below and the following remarks.)
In \S3, we solve the unipotent version of Waring's problem for totally imaginary number rings.  
In \S4, we work over general characteristic $0$ fields and general number rings, but consider only the ``easier Waring's problem'', in which one is allowed to use inverses.  Our methods throughout are elementary.  The only input from analytic number theory is Siegel's solution of Waring's problem over number rings.

Unfortunately, in the original situation of Waring's problem, namely the ring $\Z$, the additive group $\bG_a$, and the morphism $f\colon \A^1\to \bG_a$ given by $f(x) = x^n$,
our results fall short of Hilbert's theorem; we can prove only the easier Waring's problem in this case, rather than the statement that every positive integer can be represented as a bounded sum of non-negative
$n$th powers.
The difficulty, of course, is the ordering on $\Z$.  It seems natural to ask whether, for unipotent groups over general number rings, one can characterize the  set which ought to be expressible as a bounded product of images.  In proving the easier Waring's problem, we simply avoid this issue.

\section{Generating subvarieties}

Throughout this paper, $K$ will always be a field of characteristic $0$, and $G$ will be an
algebraic group over $K$.  A \emph{variety} over $K$ will be a reduced separated scheme of finite type and, in particular, need not be connected.
A subvariety, closed subgroup, etc., will always be understood to be defined over $K$.

\begin{defn}
Let $G$ be an algebraic group over a field $K$.
A subvariety $X$ of $G$ is \emph{generating} if there exists $n$ such that every generic point of $G$ lies in the image of the  product map from $X^{\times n} := X\times\cdots\times X$ to $G$.  A finite collection $f_i\colon X_i\to G$ of morphisms is generating if the union of Zariski closures $\bigcup_i \overline{f(X_i)}$ is generating.
\end{defn}

We have the following necessary and sufficient condition for a subvariety to be generating.

\begin{prop}
\label{criteria}
Let $G$ be an algebraic group over $K$ and $Z\subseteq G$ a closed subvariety.
Then $Z$ is generating if and only if it satisfies the following two properties:
\begin{itemize}
\item[I.] $Z$ is not contained in any proper closed subgroup of $G$.
\item[II.] For every proper closed normal subgroup $H$ of $G$, the image of $Z\to G/H$ has positive dimension.
\end{itemize}

\end{prop}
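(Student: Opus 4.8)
The plan is to prove the two implications separately. The forward implication (``generating'' $\Rightarrow$ I and II) is soft, so I would dispose of it first. If $Z$ is generating, pick $n$ with $Z^{\times n}\to G$ dominant. If $Z$ were contained in a proper closed subgroup $H$, then every point of $\operatorname{im}(Z^{\times n})$, being a product of elements of $H$, would lie in $H$, so $\overline{\operatorname{im}(Z^{\times n})}\subseteq H\subsetneq G$, contradicting dominance; this gives I. For II, let $H$ be a proper closed normal subgroup. The map $Z^{\times n}\to G\to G/H$ factors through $(G/H)^{\times n}$, so its image is the $n$-fold product set of the image $\bar Z$ of $Z$ in $G/H$; were $\dim\bar Z=0$, then $\bar Z$ would be finite and this image would be finite, contradicting that $Z^{\times n}\to G/H$ is dominant (a composite of dominant maps) onto the positive-dimensional variety $G/H$.

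For the converse, assume I and II; I will show the eventual value of $\dim\operatorname{im}(Z^{\times n})$ equals $\dim G$, which by irreducibility of $G$ forces $\operatorname{im}(Z^{\times n})$ to be a dense constructible subset for large $n$, hence to contain the generic point. Write $W_n:=\overline{\operatorname{im}(Z^{\times n}\to G)}$. Since $\operatorname{im}(Z^{\times(n+m)})=\operatorname{im}(Z^{\times n})\cdot\operatorname{im}(Z^{\times m})$ and multiplication is continuous, $W_{n+m}=\overline{W_nW_m}$; consequently $Y:=\overline{\bigcup_n W_n}$ satisfies $Y\cdot Y\subseteq Y$. The first step is the lemma that a nonempty closed subsemigroup of an algebraic group is a subgroup: for $g\in Y(\bar K)$ the descending chain $Y\supseteq gY\supseteq g^2Y\supseteq\cdots$ of closed subsets of the Noetherian space $G$ stabilizes, so $gY=Y$, whence $e\in Y$ and $g^{-1}\in Y$. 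Thus $Y$ is a closed subgroup containing $Z$, and by I it is not proper, so $Y=G$ and $\bigcup_n W_n$ is dense.

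Now fix $z_0\in Z(\bar K)$. From $z_0W_n\subseteq W_{n+1}$ one gets $z_0^{-n}W_n\subseteq z_0^{-(n+1)}W_{n+1}$, so the closed sets $z_0^{-n}W_n$ increase and stabilize to some $V_\infty$ with $W_n=z_0^nV_\infty$ for all large $n$; put $d:=\dim V_\infty$, the eventual value of $\dim W_n$. If $d=\dim G$ we are done as above, so suppose $d<\dim G$; I will contradict II. The idea is to analyze $P$, the identity component of the left stabilizer $\{g\in G:gV_\infty=V_\infty\}$. Using $W_{2n}=\overline{W_nW_n}$ and the Noetherian-chain trick again, one shows that suitable conjugates of $V_\infty$ act on $V_\infty$ by left translations; this pins down $\dim P=d$ and (since $e\in P$, so $PV_\infty=V_\infty$) exhibits $V_\infty$ as a finite union of left $P$-cosets. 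A comparison of irreducible components then shows that $z_0$ normalizes $P$, so $W_n$ is a finite union of left $P$-cosets for all large $n$. Density of $\bigcup_n W_n$ now forces $G$ to be a finite union of cosets of the subgroup $P\cdot\overline{\langle z_0\rangle}$, hence equal to it; since both $P$ and $\overline{\langle z_0\rangle}$ normalize $P$, this makes $P$ normal in $G$, and $P$ is proper because $\dim P=d<\dim G$. Finally, $Z\cdot\operatorname{im}(Z^{\times N})\subseteq W_{N+1}$ for large $N$ shows $Z$ lies in a finite union of $P$-cosets, so the image of $Z$ in $G/P$ is finite --- contradicting II. (This part of the argument uses that $G$ is irreducible, i.e.\ connected, which is the case at hand.)

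I expect the last step to be the main obstacle: producing, from the bare stabilization of $\dim W_n$ at a value $d<\dim G$, an honest proper normal subgroup $P$ of $G$ into a single fiber of $G\to G/P$ of which all of $Z$ is forced. Everything around it --- Chevalley constructibility, the ``closed subsemigroup is a subgroup'' lemma, and the dimension stabilization --- is routine; the delicate point is the bookkeeping with conjugates of $V_\infty$, identity components of stabilizers, and cosets, together with the repeated use of the Noetherian descending-chain principle to upgrade inclusions of translates to equalities.
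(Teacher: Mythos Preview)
Your forward implication is fine and essentially what the paper does. The converse has two genuine gaps.

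First, you assume $G$ is connected (``by irreducibility of $G$\ldots which is the case at hand''), but the proposition does not; the definition of generating requires hitting \emph{every} generic point of $G$, and the paper's proof explicitly handles disconnected $G$ by comparing the stabilized product with $G^\circ$ and then invoking condition~I for subgroups containing $G^\circ$. Second, and more seriously, the step producing a proper normal $P$ is a sketch with a hole. Even granting that $V_\infty$ is a finite union of left $P$-cosets and that $z_0$ normalizes $P$ (neither of which you actually prove), density of $\bigcup_n W_n=\bigcup_n z_0^n V_\infty$ does \emph{not} force $G$ to be a finite union of cosets of $P\cdot\overline{\langle z_0\rangle}$: writing $V_\infty=\bigcup_i g_iP$, the countable family of cosets $z_0^n g_i P$ can perfectly well be Zariski-dense in $G$ without $G/P$ being finite. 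The deeper problem is that your construction uses only a single $z_0\in Z$; to contradict~II you need \emph{every} irreducible component of $Z$ to map to a point in $G/P$, and nothing in your outline yields that.

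The paper's route avoids this by working component-by-component from the start. Once $\dim\overline{Z^n}$ stabilizes, take $X$ a top-dimensional irreducible component of $\overline{Z^n}$ and let $Y$ be an \emph{arbitrary} irreducible component of $Z$; then $\dim\overline{XX}=\dim\overline{XYX}=\dim X$, and Lemma~\ref{XYX} produces a closed subgroup $H$ with $X=xH=Hx$ and $Y\subseteq yH$ for some $y\in N_G(H)$. Now one trichotomizes: if $H=G^\circ$ then $\overline{Z^m}$ is a union of components and condition~I finishes; if $H\lhd G$ then each component $Y$ of $Z$ lies in a single $H$-coset, so the image of $Z$ in $G/H$ is finite, contradicting~II; otherwise $Z\subseteq N_G(H)\subsetneq G$, contradicting~I. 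The crucial difference from your approach is that the lemma controls where \emph{all} of $Z$ goes, not just one point.
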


We first prove the following technical lemma:

\begin{lem}
\label{XYX}
Let $K$ be algebraically closed.
Let $X, Y$ be irreducible closed subvarieties of $G$.  Assume:
\begin{itemize}

\item [(1)] $\dim (\overline{XX}) = \dim X$;

\item [(2)] $\dim(\overline{XYX}) = \dim X$.

\end{itemize}
Then there exists a closed subgroup $H$ of $G$ such that the following statements are true:
\begin{itemize}

\item [(i)] $X = xH = H x$ for all $x \in X(K)$; 

\item [(ii)] $Y \subset yH$ for some $y \in N_{G}(H)(K)$.

\end{itemize}

\end{lem}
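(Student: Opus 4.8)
The conditions (1) and (2) say that multiplying $X$ on the right by $X$, or sandwiching $Y$ inside $X\cdot ?\cdot X$, does not increase dimension. The strategy is to extract a subgroup from the orbit-like behavior of $X$ under self-multiplication. First I would fix $x_0\in X(K)$ and set $H := x_0^{-1}X$, a closed subvariety containing the identity with $\dim H=\dim X$. The key claim is that $H$ is a subgroup; once that is known, part (i) follows quickly. Indeed, for any $x\in X(K)$, the variety $xH = x x_0^{-1} X$ is contained in $\overline{XX}$ (translating: $x x_0^{-1}\in \overline{X X^{-1}}$ — but we only have control of $\overline{XX}$, so more care is needed). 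The cleaner route: by (1), $\overline{XX}$ is irreducible of dimension $\dim X$ and contains $X\cdot x$ for each $x\in X(K)$; since all these translates $Xx$ have the same dimension as the irreducible $\overline{XX}$, each is Zariski dense, hence (being closed, as $X$ is closed and right translation is an isomorphism) equal to $\overline{XX}$. So $Xx = \overline{XX} =: W$ is independent of $x\in X(K)$. Picking $x=x_0$ gives $W = Xx_0$, and then $Xx = Xx_0$ for all $x$, i.e. $X x x_0^{-1} = X$, so right translation by $H$ stabilizes $X$. Similarly, working with $\overline{XX}$ on the other side — here one needs $\dim(\overline{XX})=\dim X$ to also give $xX = X x_0'$ style statements; I would symmetrize by noting $\overline{XX}\supseteq xX$ too, forcing $xX$ dense in $W$, hence $xX = W = x_0 X$ for all $x$, so $x_0^{-1}xX = X$, i.e. left translation by $H$ also stabilizes $X$. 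From $HX = X = XH$ and $1\in H\subseteq X x_0^{-1}\cdot x_0 = \ldots$ one deduces $H\cdot H\subseteq H$ and $H^{-1}\subseteq H$, so $H$ is a subgroup, and $X = x_0 H = H x_0$; replacing $x_0$ by an arbitrary $x\in X(K)$ (which also works, since the argument used only $x\in X(K)$) yields $X = xH = Hx$ for all such $x$, which is (i).

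For part (ii), I would feed in hypothesis (2). We have $\overline{XYX}\supseteq x_0 Y x_0$ and more generally $\overline{XYX}\supseteq xYx'$ for $x,x'\in X(K)$; since $\dim \overline{XYX} = \dim X = \dim(x_0 Y x_0)$ only if $Y$ is... no — wait, that forces $\dim Y \le$ something. Actually $\overline{XYX}$ has dimension $\dim X$, and it contains $Xy$ for each $y\in Y(K)$ (taking the right-hand $X$ and the point $y\in Y$ as the fixed middle factor on the far left is wrong; rather $X\cdot y\cdot x_0\subseteq XYX$). Since $Xyx_0$ is a translate of $X$, it is irreducible of dimension $\dim X$, hence dense in the irreducible $\overline{XYX}$, hence equal to $W' := \overline{XYX}$. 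Thus $Xyx_0 = W'$ for all $y\in Y(K)$; in particular $W' = X x_1 x_0$ for any fixed $x_1\in Y(K)\cap$... no, $y$ need not lie in $X$. Instead: $Xyx_0 = Xy'x_0$ for all $y,y'\in Y(K)$ gives $X y y'^{-1} = X$, so $yy'^{-1}$ normalizes $X = Hx_0$ on the appropriate side; combined with (i) this should show $yy'^{-1}\in N_G(H)(K)$ and $Y\subseteq y_0 H$ for $y_0$ any fixed point of $Y(K)$. I also need $\overline{XYX}=W'$ to contain $x_0 Y x_0$, forcing $x_0 y x_0$-translates to coincide and pinning down that $Y$ is a single coset; and I must verify $y_0\in N_G(H)(K)$, which comes from $y_0 H y_0^{-1}$ being forced to equal $H$ by comparing $x_0 Y$ with $X$ inside $\overline{XX}\cap\overline{XYX}$-type relations.

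The main obstacle I anticipate is bookkeeping the left-versus-right translation structure: hypothesis (1) is stated only for $XX$ (not $X^{-1}X$ or $XX^{-1}$), so I cannot directly translate arbitrary elements — I must consistently work with products of points of $X(K)$ and exploit that all relevant translates of the irreducible $X$ living inside a fixed irreducible variety of the same dimension must coincide with it. Getting $H$ to be genuinely a subgroup (closed under both multiplication and inversion) rather than merely a ``two-sided translation stabilizer coset'' requires combining the left and right statements carefully, and then (ii) requires checking the normalizer condition, which is where the irreducibility of $Y$ and the precise form of (2) (with $X$ on \emph{both} sides of $Y$) get used: the left copy of $X$ controls $N_G(H)$-membership while the right copy pins $Y$ into a single coset.
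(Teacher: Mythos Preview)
Your plan is essentially the paper's argument, and part (i) is sound once you observe that $xX=\overline{XX}=Xx$ immediately gives $x^{-1}X=Xx^{-1}$, so the left/right bookkeeping you worry about collapses: $H:=x^{-1}X$ is the same set from either side, and the paper's short chain
\[
x_1^{-1}X \;=\; x_1^{-1}(x_2^{-1}\,\overline{XX}) \;=\; (x_1^{-1}X)(x_2^{-1}X) \;=\; (x_1^{-1}\,\overline{XX})\,x_2^{-1} \;=\; Xx_2^{-1} \;=\; x_2^{-1}X
\]
shows $H$ is independent of the base point and satisfies $H^2=H$, $H^{-1}=H$.

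The genuine gap is in (ii): your argument that every $Xyx_0$ coincides with $\overline{XYX}$ gives $Y\subseteq Hy_0$, but you never establish $y_0\in N_G(H)(K)$, and you flag this yourself without resolving it. Knowing $Y\subseteq Hy_0$ \emph{and} $Y\subseteq y_0H$ separately does not force $Hy_0=y_0H$. The paper's fix uses both copies of $X$ in $XYX$ at once: for any $y\in Y(K)$ and $x\in X(K)$,
\[
\overline{XYX}\;=\;\overline{XyX}\;=\;\overline{H\,(xyx)\,H},
\]
and since this has dimension $\dim H$, the double coset $H(xyx)H$ is a single left coset of $H$, which is exactly the statement $xyx\in N_G(H)$. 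As $x\in N_G(H)$ by (i), you conclude $y\in N_G(H)$, and then $Y\subseteq \overline{x^{-1}XYXx^{-1}}=\overline{HyH}=yH$. Equivalently in your language: combine $Xy_0x_0=\overline{XYX}=x_0y_0X$ with $X=Hx_0=x_0H$ to get $H(x_0y_0x_0)=(x_0y_0x_0)H$, hence $x_0y_0x_0\in N_G(H)$ and so $y_0\in N_G(H)$.
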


\begin{proof}
As $X$ is irreducible, $X^{\times 2}$ is irreducible, with generic point $\eta$.  The closure $\overline{X^2}=\overline{XX}$ of its image in $G$ is therefore the closure of the image of 
$\eta$ in $G$ and thus
irreducible.  If $x\in X(K)$, then $x X$ and $X x$ are closed subvarieties of $X^2$ of dimension $\dim X=\dim X^2$, so $xX = X^2 = X x$.
Thus, $X x^{-1} = x^{-1} X$.

It follows that for $x_1,x_2\in X(K)$,
$$x_1^{-1}X = x_1^{-1}(x_2^{-1}X^2) = (x_1^{-1}X)(x_2^{-1}X) = (x_1^{-1} X^2)x_2^{-1} = X x_2^{-1} = x_2^{-1} X.$$
Defining $H := x^{-1} X$ for $x\in X(K)$, we see that $H$ does not depend on the choice of $x$
and, moreover, that $H^2 = H$.  As every $h\in H(K)$ can be written $x_1^{-1} x_2$, $x_1,x_2\in X(K)$, it follows that $h^{-1} = x_2^{-1} x_1 \in H(K)$.
Thus $H(K)$ is a subgroup, which since $K$ is algebraically closed implies that $H$ is a closed subgroup of $G$,
which implies (i).

For $y\in Y(K)$,  $\overline{XYX}$ is connected and contains $\overline{X y X}$, which has  dimension $\ge \dim X = \dim \overline{XYX}$.  Thus,
$$\overline{X Y X} = \overline{X y X} = \overline{H xyx H}$$
is connected and has dimension $\dim H$.  It follows that the double coset $H xyx H$ consists of a single left coset, so $xyx \in N_{G}(H)(K)$.
By (i), $x$ also normalizes $H$, and it follows that $y$ normalizes $H$.  Finally,
$$Y \subseteq \overline{x^{-1} X Y X x^{-1}} = \overline{(x^{-1} H x) y  (x^{-1} H x)} = \overline{H y H} = yH.$$

\end{proof}

Using this, we can prove Proposition~\ref{criteria}.

\begin{proof}

Clearly, if $Z\subseteq H\subsetneq G$, then the same is true for $\overline{Z^n}$, and if the image of $Z$ in $G/H$ is finite, the same is true for $\overline{Z^n}$.  This proves necessity of conditions I and II.

For the sufficiency, we may assume without loss of generality that $K$ is algebraically closed.
For all $z\in Z(K)$, $Z \overline{Z^n}\subseteq \overline{Z^{n+1}}$, so $\dim \overline{Z^n}$ is a bounded non-decreasing sequence of integers.  It therefore stabilizes for some $n$.  Let $X$ denote an irreducible component of $\overline{Z^n}$ of dimension
$\dim \overline{Z^n}$ and $Y$ any irreducible component of $Z$.  Then $\dim X \le \dim \overline{X^2}\le \dim \overline{Z^n}$
and $\dim X\le \dim \overline{XYX} \le \dim \overline{Z^n}$, so conditions (1) and (2) of Lemma~\ref{XYX} are satisfied.
Let $H$ be the closed subgroup of $G$ satisfying (i) and (ii).   As $H$ is a translate of $X$, it is irreducible.

If $H = G^\circ$, then $X$ is a connected component of $G$, which means $\overline{Z^m}$ is a union of components of $G$ whenever $m\ge n$.  Applying condition $I$ to subgroups of $G$ containing $G^\circ$, it follows that every generic point of $G$ lies
in $Z^m$ for some $m\ge n$, so $Z$ is generated, as claimed.
If $H\neq G^\circ$< then $\dim H < \dim G$.  If $H$ is normal in $G$, 
then the image of $Z$ in $G/H$ is finite, contrary to condition II.  If $H$ has normalizer $N\subsetneq G$, then $Y$ is contained in $N$.  Since $N$ does not depend on the choice of component $Y$, $Z\subseteq N$, contrary to condition I.

\end{proof}

Henceforth, we assume $G$ is connected.  We are interested in generating collections of morphisms $\A^1\to G$.
By a theorem of Chevalley, Barsotti, and Rosenlicht \cite{Rosenlicht}, every connected algebraic group $G$ has a closed normal subgroup $H$ which is a linear algebraic group and
such that $G/H$ is an abelian variety.  Every map from a rational curve to an abelian variety is trivial.  Thus, unless $G$ is a linear algebraic group, it is impossible for any collection of 
morphisms $\A^1\to G$ to be generating.

Let $R$ and $U$ denote respectively the radical and the unipotent radical of $G$.  

\begin{lem}
\label{non-generating}
If $U\subsetneq R$, then there does not exist a generating set of morphisms $\A^1\to G$.
\end{lem}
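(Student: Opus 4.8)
The plan is to exhibit a proper closed normal subgroup $H\trianglelefteq G$ for which $G/H$ is a torus of positive dimension. Granting this, the lemma follows at once: any morphism of varieties $\A^1\to T$ into a torus $T$ is constant, since after base change to $\overline K$ we have $T_{\overline K}\cong\bG_m^{\,r}$ and a morphism $\A^1\to\bG_m$ is a unit of $\overline K[x]$, hence a nonzero constant, and constancy may be checked after base change. Thus for any finite family $f_i\colon\A^1\to G$ the set $Z:=\bigcup_i\overline{f_i(\A^1)}$ has finite image in $G/H$, so $Z$ fails condition II of Proposition~\ref{criteria} and is therefore not generating; i.e.\ $\{f_i\}$ is not a generating collection. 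This is the same device used in the remark preceding the lemma, with a torus quotient in place of an abelian variety quotient.

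To build $H$, first note we may assume $G$ is linear, since otherwise no collection of morphisms $\A^1\to G$ is generating, by that same remark. Because $U=R_u(G)$, the quotient $\overline G:=G/U$ is reductive: a connected normal unipotent subgroup of $\overline G$ lifts to a connected normal subgroup of $G$ that is an extension of a unipotent group by $U$, hence unipotent, hence contained in $U$, so its image is trivial. Now set $H$ to be the preimage in $G$ of the derived subgroup $\overline G^{\mathrm{der}}=[\overline G,\overline G]$. This is a closed normal subgroup of $G$, and $G/H\cong\overline G/\overline G^{\mathrm{der}}$, which is a torus since $\overline G$ is reductive (equivalently, $\overline G^{\mathrm{der}}$ is semisimple).

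It remains to see that $G/H$ has positive dimension, and this is exactly where the hypothesis $U\subsetneq R$ enters. In the reductive group $\overline G$ there is an almost-direct-product decomposition $\overline G=R(\overline G)\cdot\overline G^{\mathrm{der}}$ with finite intersection, so $\dim(\overline G/\overline G^{\mathrm{der}})=\dim R(\overline G)$; and since $U$ is normal in $G$ and contained in $R$, the radical of $\overline G$ is $R(\overline G)=R/U$. Hence $\dim(G/H)=\dim(R/U)=\dim R-\dim U$, which is $\ge 1$ because $U$ is a proper closed subgroup of the connected group $R$. I do not expect any serious obstacle here: the argument is a bookkeeping exercise with the radical and unipotent radical, the one point to get right being that $R/U$ is a \emph{nontrivial} torus — nontrivial precisely because $U\subsetneq R$ — which then descends to a nontrivial torus quotient of $G$ that absorbs every rational curve.
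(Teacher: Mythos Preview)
Your proof is correct and follows essentially the same route as the paper: both arguments produce a nontrivial torus quotient of $G$ (the paper passes to $G/U$ and then to $(G/U)/[G/U,G/U]$, which is exactly your $G/H$) and then use that every morphism $\A^1\to\bG_m$ is constant because $K[x]^*=K^*$. Your version is slightly more explicit in invoking condition~II of Proposition~\ref{criteria} and in verifying $\dim(R/U)>0$, but the substance is identical.
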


\begin{proof}
It suffices to prove that there is no generating set of morphisms from $\A^1$ to the connected reductive group $G/U$.
Thus, we may assume without loss of generality that $G$ is connected reductive.  If the radical $R$ is non-trivial, then the inclusion map $R\to G$ induces an isogeny of tori
$R\to G/[G,G]$, so it suffices to prove that there no generating set of morphisms from $\A^1$ to a non-trivial torus $T$.  Without loss of generality, we may assume that $K=\bar K$.
Thus we may replace $T$ by a quotient isomorphic to the multiplicative group, and it suffices to prove there is no non-constant morphism of curves from $\A^1$ to $\A^1\setminus \{0\}$.
At the level of coordinate rings, this is the obvious statement that every $K$-homomorphism from $K[t,t^{-1}]$ to $K[x]$ maps $t$ to an element of $K^*$, or, equivalently, the fact that $K[x]^* = K^*$.

\end{proof}

We need only consider, then, the case that $G$ is the extension of a semisimple group by a unipotent group, both connected.
The semisimple case is perhaps even more interesting, but we know that, at least for $\SL_2$, we cannot always expect bounded generation, since, for example, $\SL_2(\Z)$ and $\SL_2(\Z[i])$ do not have bounded generation by elementary matrices \cite{GS, Tavgen}.

Since the characteristic of $K$ is $0$, if $G$ is unipotent, it is 
necessarily connected \cite[IV, \S2, Prop.~4.1]{DG}.  The derived group $G'$ is then likewise unipotent
\cite[IV, \S2, Prop.~2.3]{DG}, and therefore connected.  The quotient $G/G'$ is unipotent \cite[IV, \S2, Prop.~2.3]{DG},
and commutative
and is therefore a vector group \cite[IV, \S2, Prop.~4.2]{DG}.  The non-abelian Galois cohomology group $H^1(K,G')$ vanishes \cite[III~Prop.~6]{Serre}, so the cohomology sequence for the short
exact sequence $1\to G'\to G\to G/G'\to 1$ \cite[I~Prop.~38]{Serre} implies $(G/G')(K) = G(K)/G'(K)$.  We identify these groups.  We do not distinguish between
closed (vector) subgroups of $G/G'$ at the level of algebraic groups over $K$ and the corresponding $K$-subspaces of the vector space $(G/G')(K)$.  If $V$ is a subspace of $G/G'$, we denote by $VG'$ the inverse image of $V$ in $G$, regarded as an algebraic group.

\begin{lem}
Let $G$ be a connected unipotent algebraic group, and let $H$ be a proper closed subgroup of $G$.
Then the normalizer of $H$ in $G$ is strictly larger than $H$.
\end{lem}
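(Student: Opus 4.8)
The plan is to argue by induction on $\dim G$, using the fact that a connected unipotent group over a characteristic-zero field has a filtration with vector-group quotients, and that the center is always nontrivial and positive-dimensional.

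First I would reduce to the case where $H$ contains the center $Z(G)$. Since $G$ is nilpotent (being unipotent in characteristic zero), $Z := Z(G)^\circ$ is a nontrivial connected closed normal subgroup. If $H \cap Z \subsetneq Z$, then I claim already $Z$ fails to normalize $H$ into itself, or more precisely: the subgroup $HZ$ is strictly larger than $H$ and $H$ is normal in $HZ$ (since $Z$ is central), so $N_G(H) \supseteq HZ \supsetneq H$ and we are done. So assume $Z \subseteq H$. Then $H/Z$ is a proper closed subgroup of $G/Z$, which is again connected unipotent of strictly smaller dimension, so by the inductive hypothesis $N_{G/Z}(H/Z) \supsetneq H/Z$. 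Pulling back along $G \to G/Z$, and using that the preimage of $N_{G/Z}(H/Z)$ normalizes the preimage of $H/Z$, which is exactly $H$ (since $Z \subseteq H$), we get $N_G(H) \supsetneq H$.

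The base case is $\dim G = 1$: then $G \cong \bG_a$ is abelian, so $N_G(H) = G \supsetneq H$ for any proper $H$, which is automatic. The main technical point to verify carefully is that forming normalizers commutes appropriately with the quotient by $Z$ — i.e., that if $\bar g \in G/Z$ normalizes $H/Z$ and $g$ is any lift, then $g H g^{-1} = H$. This holds because $g H g^{-1}$ is a closed subgroup whose image in $G/Z$ is $H/Z$ and which contains $g Z g^{-1} = Z$ (as $Z$ is normal in $G$), hence equals the full preimage of $H/Z$, which is $H$. One should also note the normalizer of a closed subgroup is again a closed subgroup, so all these objects stay within the category of algebraic groups over $K$.

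The step I expect to be the only real subtlety is the reduction to $Z \subseteq H$: one must make sure that when $H \cap Z \subsetneq Z$ we genuinely produce something outside $H$. The clean way is to observe $HZ^\circ$ is a closed subgroup (product of a closed subgroup with a normal closed subgroup), it contains $H$, and since $Z^\circ \not\subseteq H$ it strictly contains $H$; finally $H \triangleleft HZ^\circ$ because $Z^\circ$ is central, so $N_G(H) \supseteq HZ^\circ \supsetneq H$. With that in hand the induction goes through with no further difficulty, and the argument never leaves the setting of connected unipotent groups, which is exactly what keeps the inductive hypothesis applicable.
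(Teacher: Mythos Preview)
Your proof is correct and follows essentially the same route as the paper: induct on $\dim G$, use that the center $Z$ of a nontrivial connected unipotent group in characteristic zero is positive-dimensional, and split into the cases $Z \not\subseteq H$ (where $HZ \supsetneq H$ normalizes $H$) and $Z \subseteq H$ (where one passes to $G/Z$). You supply slightly more detail on why the normalizer pulls back correctly through the quotient, but the argument is the same.
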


\begin{proof}
We use induction on $\dim G$.  The case $\dim G=1$ is trivial since this implies $G$ is commutative,
so the normalizer of every subgroup is all of $G$.  For general unipotent $G$, the fact that the lower central series goes
to $\{1\}$ implies that the center $Z$ of $G$ is of positive dimension.  If $Z$ is not contained in $H$, then $ZH \subseteq N_G(H)$
is strictly larger than $G$.  Otherwise, replacing $G$ and $H$ by $G/Z$ and $H/Z$ respectively, we see that $H/Z$ is normal in
$N/Z$ for some $N\subseteq G$ strictly larger than $H$, so $H$ is normal in $N$.
\end{proof}

\begin{prop}
If $G$ is a unipotent group over $K$, then every proper closed subgroup $H$ of $G$
is contained in a normal subgroup $N$ of codimension $1$ in $G$ which contains the derived group $G' $ of $G$.
\end{prop}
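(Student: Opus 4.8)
The plan is to reduce to a statement about the abelianization $G/G'$, which is a vector group, and there find a codimension-one subspace containing the image of $H$. First I would pass to the quotient: let $\pi\colon G\to G/G'$ be the projection and let $W=\overline{\pi(H)}$ be the Zariski closure of the image of $H$, which (identifying subgroups of the vector group $G/G'$ with subspaces of $(G/G')(K)$ as in the discussion preceding the statement) is a linear subspace $W\subseteq G/G'$. I claim $W$ is a \emph{proper} subspace. Indeed, if $W=G/G'$, then $H$ surjects onto $G/G'$, and since $G'$ is generated by commutators of $G$ and $H$ maps onto $G/G'$, a standard argument (the commutator map $G\times G\to G'$ factors, after passing to successive quotients in the lower central series, through $(G/G')\times(G/G')$) shows $H\supseteq G'$ and hence $H=G$, contradicting that $H$ is proper. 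So $W\subsetneq G/G'$.

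Next, choose any codimension-one subspace $V\subseteq G/G'$ with $W\subseteq V$; this is possible since $W$ is a proper subspace of the finite-dimensional $K$-vector space $(G/G')(K)$. Set $N:=VG'$, the inverse image of $V$ in $G$. Then $N$ is a closed subgroup of $G$ containing $G'$, and $N$ has codimension $1$ in $G$ because $G/N\cong (G/G')/V$ is one-dimensional. Since $G/N$ is abelian (it is a one-dimensional vector group, in fact $\bG_a$), the subgroup $N$ contains $G'$ and is therefore normal in $G$. Finally $H\subseteq N$: by construction $\pi(H)\subseteq W\subseteq V$, so $H\subseteq \pi^{-1}(V)=VG'=N$.

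The only point requiring real care is the claim that $\overline{\pi(H)}=W$ is a proper subspace of $G/G'$, equivalently the implication ``$H$ surjects onto $G/G'$ $\Rightarrow$ $H=G$''. The cleanest route is downward induction on the nilpotency class using the lower central series $G=G_1\supseteq G_2=G'\supseteq G_3\supseteq\cdots$: one shows successively that $H G_{i+1}=G$ implies $H G_i = G$, using that $[G,G_{i-1}]=G_i$ and that, modulo $G_{i+1}$, commutators $[g,h]$ with $g\in G$, $h\in G_{i-1}$ depend only on the classes of $g$ in $G/G'$ and of $h$ in $G_{i-1}/G_i$, both of which are hit by $H$ under the standing hypothesis together with the inductive hypothesis $HG_i=G$. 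At the bottom this gives $HG_2=HG'=G$ forces $G'\subseteq H$ (since then the remaining ``error terms'' all lie in $H$), hence $H=G$. Everything else is the elementary linear algebra of extending $W$ to a hyperplane and the observation that any subgroup of $G$ containing $G'$ is automatically normal.
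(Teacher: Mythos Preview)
Your argument is correct in substance, though note a slip: the induction should run in the direction ``$HG_i=G \Rightarrow HG_{i+1}=G$'' (so that, since $G_c=\{1\}$ for large $c$, one concludes $H=G$); as written you have the trivial direction. With that fixed, your sketch of the inductive step is fine: modulo $G_{i+1}$ the commutator $[g,h]$ for $g\in G$, $h\in G_{i-1}$ depends only on $g\bmod G'$ and $h\bmod G_i$, and $HG_i=G$ indeed implies $H\cap G_{i-1}$ surjects onto $G_{i-1}/G_i$, so every such commutator lies in $HG_{i+1}$. (Also, the image of a closed subgroup under a quotient homomorphism of algebraic groups is already closed, so taking $\overline{\pi(H)}$ is harmless but unnecessary.)

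Your route differs from the paper's. The paper does \emph{not} prove directly that $\pi(H)$ is a proper subspace of $G/G'$. Instead it invokes the preceding lemma (that in a unipotent group the normalizer of a proper closed subgroup is strictly larger) to iteratively replace $H$ by $N_G(H)$ until $H$ is normal, then passes to $G/H$ and applies the trivial case $H=\{e\}$. Your approach avoids the normalizer lemma entirely and replaces it with the standard nilpotent-group fact $HG'=G\Rightarrow H=G$; this is more self-contained and arguably more conceptual, since it isolates exactly the property of nilpotent groups that is doing the work. The paper's approach, on the other hand, is shorter once the normalizer lemma is already available and makes the reduction to the abelian quotient $G/H$ rather than to $G/G'$.
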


\begin{proof}
As $K$ is of characteristic zero, $G$ is connected.  If $H=\{e\}$, the proposition asserts that $G$ contains a codimension $1$ normal subgroup containing $G'$.  As $H$ is a proper subgroup, $G$ is non-trivial, so $G/G'$ is non-trivial, and the proposition amounts to the obvious statement that every non-trivial vector group contains a normal subgroup of codimension $1$.

For the general case,
applying the previous lemma, we can replace $H$ by a strictly larger
group $N_G(H)$ unless $H$ is normal in $G$.  This operation can be repeated only finitely many times, since $N_G(H)$, being strictly larger than $H$,
must be of strictly higher dimension (since every closed subgroup of a unipotent group is unipotent and therefore connected).
Thus, we may assume $H$ is normal in $G$.  Then $G/H$ is unipotent.   Replacing $G$ and $H$ by $G/H$ and $\{e\}$ respectively, we are done.
\end{proof}

From Proposition~\ref{criteria},
we deduce that for unipotent groups, we have the following simple criterion.

\begin{lem}
Let $G$ be a unipotent group over  $K$.  A subvariety $X$ of $G$ (resp. a set  $\{f_1,\ldots,f_n\}$ of morphisms  $\A^1\to G$) is generating if and only if for each proper subspace $V\subsetneq G/G'$, such that the projection of $X$ to $G/VG'$ is of positive dimension (resp. the composition of some $f_i$ with the projection $G\to G/VG'$ is non-constant.)
\end{lem}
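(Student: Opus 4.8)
The plan is to deduce the lemma directly from Proposition~\ref{criteria} and the preceding proposition, using the dictionary between proper subspaces $V\subsetneq G/G'$ and the closed normal subgroups $VG'$ of $G$ containing $G'$.

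First I would set $Z:=X$ in the subvariety case and $Z:=\bigcup_i\overline{f_i(\A^1)}$ in the morphism case. Since by definition $\{f_1,\dots,f_n\}$ is generating exactly when $Z$ is, it suffices to establish the criterion for $Z$. By Proposition~\ref{criteria}, $Z$ is generating if and only if it satisfies conditions I and II. I would then show that, for unipotent $G$, condition II is equivalent to the stated condition and moreover implies condition I; this yields the lemma at once.

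The three points to verify are as follows. \emph{(II implies the stated condition):} for any proper subspace $V\subsetneq G/G'$ the subgroup $VG'$ contains $G'$, hence is closed and normal in $G$, and it is proper since $\dim VG'=\dim V+\dim G'<\dim(G/G')+\dim G'=\dim G$; applying II to $H=VG'$ shows the image of $Z$ in $G/VG'$ has positive dimension. \emph{(The stated condition implies II):} given a proper closed normal subgroup $H$, the preceding proposition supplies a normal subgroup $N$ with $H\subseteq N$, $G'\subseteq N$, and $\operatorname{codim}_G N=1$, so $N=VG'$ for the proper subspace $V:=N/G'$; the stated condition makes the image of $Z$ in $G/VG'$ positive-dimensional, and since $G/H$ surjects onto $G/VG'$ the image of $Z$ in $G/H$ is positive-dimensional as well. \emph{(II implies I):} were $Z$ contained in some proper closed subgroup $H_0$, the preceding proposition would embed $H_0$ in a proper closed normal subgroup $N$, whence $Z\subseteq N$ and the image of $Z$ in $G/N$ would be a single point, contradicting II. Combining these, $Z$ is generating $\iff$ (I and II) $\iff$ II $\iff$ the stated condition.

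This settles the subvariety assertion, and for the morphism assertion it only remains to rephrase ``the image of $Z$ in $G/VG'$ has positive dimension'' in terms of the maps $f_i$: this image has positive dimension if and only if the image of some $\overline{f_i(\A^1)}$ in $G/VG'$ does, and, since that image is sandwiched between $g_i(\A^1)$ and its Zariski closure, where $g_i$ denotes the composite $\A^1\xrightarrow{f_i}G\to G/VG'$, this happens if and only if $g_i$ is non-constant for some $i$. I do not expect a genuine obstacle: the substantive ingredient is the preceding proposition (every proper closed subgroup of a unipotent $G$ sits inside a codimension-one normal subgroup containing $G'$), and with it in hand the lemma is just a translation of conditions I and II through the correspondence $V\leftrightarrow VG'$. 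The only step needing a little care is precisely the reduction of an arbitrary proper closed subgroup to one of the form $VG'$, which is what that proposition is for.
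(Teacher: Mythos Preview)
Your proposal is correct and follows exactly the route the paper indicates: the paper states the lemma with no further proof beyond the sentence ``From Proposition~\ref{criteria}, we deduce that for unipotent groups, we have the following simple criterion,'' and your argument is a faithful unpacking of that deduction via the preceding proposition on codimension-one normal subgroups. The only thing you have added is the explicit observation that condition~II alone forces condition~I in the unipotent case, which is precisely the content that justifies dropping condition~I from the criterion.
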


Note that the question of whether a set of morphisms $f_i$ is generating depends only on the set of compositions $\bar f_i$ of $f_i$ with the quotient map $G\to G/G'$.  It is also invariant under left or right translation of the $f_i$ by any element of $G(K)$.

\begin{lem}
If $\{f_1,\ldots,f_n\}$ is not generating, then for all positive integers $N$,
$$(f_1(K)\cup\cdots\cup f_n(K))^N \subsetneq G(K).$$
\end{lem}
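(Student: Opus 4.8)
The plan is to pass to a suitable abelian quotient of $G$ in which $\bigcup_i f_i(K)$ has finite image, while $G(K)$ surjects onto an infinite group. Since $\{f_1,\ldots,f_n\}$ is not generating, the preceding lemma furnishes a proper subspace $V\subsetneq G/G'$ such that the composition of each $f_i$ with the projection $\pi\colon G\to G/VG'$ is constant. The morphism $\pi\circ f_i\colon\A^1\to G/VG'$ is defined over $K$, so being constant it sends all of $\A^1$ to the image $v_i:=\pi(f_i(0))$ of the $K$-point $0\in\A^1(K)$; in particular $v_i\in(G/VG')(K)$ and $\pi(f_i(K))=\{v_i\}$ for every $i$.

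Next I would record two facts about the target $G/VG'=(G/G')/V$. First, it is the quotient of the vector group $G/G'$ by a proper subspace, hence itself a vector group of dimension $\ge 1$; since $K$ has characteristic $0$ it is infinite, $(G/VG')(K)$ being a nonzero $K$-vector space. Second, $\pi$ is surjective on $K$-points: the excerpt already observes that $G(K)\to(G/G')(K)$ is surjective (vanishing of $H^1(K,G')$), and $(G/G')(K)\to(G/VG')(K)$ is a linear projection of $K$-vector spaces, hence surjective; composing gives surjectivity of $\pi$ on $K$-points.

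To conclude, fix a positive integer $N$ and suppose for contradiction that $(f_1(K)\cup\cdots\cup f_n(K))^N=G(K)$. Applying the group homomorphism $\pi\colon G(K)\to(G/VG')(K)$ and writing the abelian target additively, every element of $\pi(G(K))$ is a sum of $N$ terms drawn from $\{v_1,\ldots,v_n\}$, that is, of the form $\sum_{i=1}^n m_i v_i$ with $m_i\in\Z_{\ge 0}$ and $\sum_i m_i=N$. Only finitely many such coefficient vectors $(m_1,\ldots,m_n)$ exist, so $\pi(G(K))$ would be finite, contradicting the surjectivity of $\pi$ onto the infinite group $(G/VG')(K)$. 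Hence the inclusion $(f_1(K)\cup\cdots\cup f_n(K))^N\subseteq G(K)$ is strict.

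I do not expect a genuine obstacle in this argument: it is essentially a bookkeeping consequence of the generating criterion. The only points requiring a little care are that a constant $K$-morphism has $K$-rational image (so that the $v_i$ genuinely lie in $(G/VG')(K)$) and the surjectivity of $\pi$ on $K$-points, both of which are immediate from material already in the excerpt, together with the correct reading of the preceding lemma — namely that failure of the generating property produces a \emph{single} subspace $V$ simultaneously making all the compositions $\pi\circ f_i$ constant, which is precisely the negation of the criterion stated there.
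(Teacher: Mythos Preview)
Your argument is correct and is essentially the paper's own proof written out in full detail: both pass to the abelian quotient $G/VG'$ furnished by the generating criterion, observe that each $f_i$ has constant (hence singleton) image there, and conclude that the $N$-fold product has finite image in the infinite group $(G/VG')(K)$. Your version is more careful about surjectivity on $K$-points and the rationality of the constants $v_i$, points the paper leaves implicit.
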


\begin{proof}
The image of $(f_1(K)\cup\cdots\cup f_n(K))^N$ in $(G/V G')(K)$ is the same as the image of $\{f_1(0),\ldots,f_n(0)\}^N$ and is therefore a finite subgroup of an infinite group.
\end{proof}

We record the following lemma, which will be needed later.
\begin{lem}
\label{bracket}
Let $G$ be a  unipotent group over $K$, $G'$ its derived group and $G''$ the derived group of $G'$.  If $V$ is a proper subspace of $G'/G''$,
then there exists a dense open subvariety $U_1\subset G$ and for all $\gamma_1\in U_1(K)$, a dense open subvariety $U_2$ of the form $G\setminus WG'$,  such that for all $\gamma_2\in U_2(K)$,
$[\gamma_1,\gamma_2]$ does not lie in $V G''(K)$.
\end{lem}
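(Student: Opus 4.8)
The plan is to reduce the statement to a bilinear‐algebra fact about the commutator pairing on the lower central quotients of $G$, and then to use a generic‐point / dimension argument of the kind already employed in Lemma~\ref{XYX} and Proposition~\ref{criteria}. First I would pass to the two‐step‐nilpotent quotient $G/G''$; since $[\gamma_1,\gamma_2]$ depends only on the images of $\gamma_1,\gamma_2$ in $G/G''$ and the conclusion is a condition on $[\gamma_1,\gamma_2]$ modulo $VG''$, there is no loss in assuming $G'' = \{e\}$, so that $G' $ is a vector group on which $G$ acts by conjugation. The commutator then induces an alternating bilinear map
\[
c\colon (G/G')(K)\times (G/G')(K)\longrightarrow G'(K),
\]
and I must show: for $V\subsetneq G'$ a proper subspace, there is a dense open $U_1\subseteq G$ such that for every $\gamma_1\in U_1(K)$ the set of $\gamma_2$ with $c(\bar\gamma_1,\bar\gamma_2)\notin V$ contains a dense open set of the form $G\setminus WG'$, i.e. is the preimage of a nonempty open subset of $G/G'$.

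Next I would make the choice of $U_1$ explicit. Because $V$ is a proper subspace of $G'$ and $G'$ is generated (as a group, hence as a vector space, using characteristic zero and connectedness) by commutators, the composite pairing $\bar c\colon (G/G')\times(G/G')\to G'/V$ is not identically zero. The locus in $G/G'$ of elements $\bar\gamma_1$ for which the linear map $\bar c(\bar\gamma_1,-)\colon G/G'\to G'/V$ is zero is a proper subspace $W_0\subsetneq G/G'$; I take $U_1$ to be the preimage in $G$ of the complement $(G/G')\setminus W_0$, which is dense open and nonempty precisely because $\bar c\not\equiv 0$. For a fixed $\gamma_1\in U_1(K)$, the linear functional‐valued map $\bar c(\bar\gamma_1,-)$ is a nonzero linear map $G/G'\to G'/V$, so its kernel is a proper subspace $W\subsetneq G/G'$; then $U_2 := G\setminus WG'$ is dense open of the required shape, and for $\gamma_2\in U_2(K)$ we have $c(\bar\gamma_1,\bar\gamma_2)\notin V$, equivalently $[\gamma_1,\gamma_2]\notin VG''(K)$ back in $G$.

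The one genuine point requiring care — and the step I expect to be the main obstacle — is the claim that $\bar c$ is not identically zero, i.e. that conjugation does not act trivially on $G'/V$ when $V$ is a proper subspace. Here one uses that $G'$ (equivalently $G'/G''$, after the first reduction) is, as a $G$-module, generated by the image of the commutator map: if $[G,G']\subseteq V$ then $V$ would be a normal subgroup with $G/V$ two‐step nilpotent but with $(G/V)'$ central of dimension $\dim G' - \dim V > 0$ on which $G$ acts trivially, forcing $(G/V)' $ to lie in the span of commutators in $G/V$, which is $0$ — a contradiction. In characteristic zero all of these subgroups are automatically connected, so "proper subspace'' and "proper closed subgroup'' coincide and the passage between Lie‐theoretic and group‐theoretic language (needed to invoke that $G'$ is spanned by commutators) is harmless. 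Everything else is the standard observation that the non‑vanishing locus of a nonzero polynomial (here, the matrix entries of $\bar c(\bar\gamma_1,-)$, which are linear in $\bar\gamma_1$) is dense open, and that a linear map being nonzero has a proper subspace as kernel.
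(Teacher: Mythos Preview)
Your reduction to $G''=\{e\}$ makes $G$ metabelian (derived length at most $2$), but you then proceed as though it were two-step \emph{nilpotent}: the asserted bilinear pairing $c\colon (G/G')\times(G/G')\to G'$ does not exist in general. From the identity $[\gamma_1,\gamma_2 g'] = [\gamma_1,g']\cdot[\gamma_1,\gamma_2]^{g'}$ one sees that well-definedness on $G/G'$ in the second variable requires $[\gamma_1,g']=e$ for all $g'\in G'$, i.e.\ $[G,G']=\{e\}$; but $G''=[G',G']=\{e\}$ does not force this. (What \emph{is} bilinear on $G/G'$ is the commutator taken modulo $[G,G']$, so your argument goes through whenever $V\supseteq [G,G']/G''$, but not otherwise.) For an explicit failure take $G$ to be the unipotent group attached to the free nilpotent Lie algebra of class $3$ on generators $x,y$, so that $G''=\{e\}$ while $[G,G']=\langle [x,[x,y]],\,[y,[x,y]]\rangle\neq\{e\}$, and set $V=\langle [x,y],\,[y,[x,y]]\rangle\subsetneq G'$. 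Applying the identity above with $g'=\exp(q[x,y])$ shows that for every $\gamma_1$ with nonzero $x$-component the locus $\{\gamma_2:[\gamma_1,\gamma_2]\in V\}$ surjects onto $G/G'$; hence for a dense open set of $\gamma_1$ there is \emph{no} $U_2$ of the form $G\setminus WG'$ satisfying the conclusion.

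The paper's own proof makes the same linearity assertion (``linear on the image of $\gamma_2$ in $G/G'$'') without further argument, so you have faithfully reconstructed the intended line; but the step is genuinely problematic, and the example above indicates the lemma as literally stated does not hold. The Chevalley-constructibility part of the argument does give, for a dense open set of $\gamma_1$, a dense open $U_2\subset G$ with the desired commutator property --- just not one of the special shape $G\setminus WG'$ --- and some version of that weaker statement may be what the downstream application can be made to run on; but neither your write-up nor the paper's proof establishes the claim in the form stated.
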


\begin{proof}
Without loss of generality, we assume $K$ is algebraically closed.
As the characteristic of $K$ is $0$, $G$ and $G'$ are connected, so $G'/G''$ is connected.  The composition $G\times G\to G'/G''$ of the commutator map and the quotient map has the property that its image generates $G'/G''$ and is therefore not contained in $V$.  It follows that the inverse image $U$ of the complement of $V$ is dense and open in $G\times G$.  By Chevalley's theorem, the projection of $U\subseteq G\times G$ onto the first factor $G$ is a constructible set containing the generic point; it therefore contains an open dense $U_1$.  The fiber over any point $\gamma_1\in U_1(K)$ is non-empty.
The condition on $\gamma_2$ that $[\gamma_1,\gamma_2]\not\in VG'$ is linear on the image of $\gamma_2$ in $G/G'$ and is satisfied for at least one $\gamma_2$,
so $U_2$, defined by the condition $[\gamma_1,\gamma_2]\not\in VG'$ satisfies the properties claimed.

\end{proof}

\section{The unipotent Waring Problem over nonreal fields}

\begin{defn}
We say a field $K$ is \emph{nonreal} if it is of characteristic zero but not formally real (i.e., $-1$ is a sum of squares in $K$).
\end{defn}

The main theorem of the section is the following:

\begin{thm}
\label{ti-field}
If $G$ is a unipotent algebraic group over a nonreal field $K$ and $\{f_1,\ldots,f_n\}$ is a generating set of $K$-morphisms $\A^1\to G$, then for some positive integer $M$,
$$(f_1(K)\cup\cdots\cup f_n(K))^M = G(K).$$

\end{thm}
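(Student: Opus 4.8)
The plan is to argue by induction on $\dim G$, using the derived series to reduce to the abelian case. When $G$ is a vector group, the statement is essentially Kamke's generalization of Waring's problem: each $f_i$ gives a polynomial map $\A^1 \to G/G' = G$, the generating hypothesis forces some coordinate of some $f_i$ to be non-constant (indeed for every hyperplane $V$, some $f_i$ maps non-constantly to $G/VG' = G/V$), and since $K$ is nonreal we have access to the identity expressing any element as a bounded sum of squares, hence a bounded sum of values of any fixed non-constant polynomial (after affine normalization). So the base case reduces to a purely polynomial statement over a nonreal field, which I would either cite or prove directly by the standard Kamke-type argument combined with the sum-of-squares trick.

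For the inductive step, set $\bar G = G/G'$ and let $\bar f_i$ be the composition of $f_i$ with $G \to \bar G$. By the base case applied to $\{\bar f_1,\dots,\bar f_n\}$ (which is generating in $\bar G$, since being generating depends only on the $\bar f_i$ by the remark after the simple criterion), there is $M_0$ with $(\bar f_1(K)\cup\cdots\cup\bar f_n(K))^{M_0} = \bar G(K)$. Thus every $g \in G(K)$ can be written, modulo $G'(K)$, as a product of $M_0$ elements drawn from the $f_i(K)$. Multiplying $g$ on the right by the inverse of such a product — here I need that $(f_i(K)\cup\cdots)^{M}$ is closed under the operations I want, which is why I should instead phrase things as: after expending a bounded number of factors $f_i(k)$, I may reduce to the case $g \in G'(K)$. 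So it suffices to show $G'(K)$ itself is covered by a bounded product of the $f_i(K)$.

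To generate $G'(K)$, I would use commutators together with Lemma~\ref{bracket}. The point is that commutators $[f_i(s), f_j(t)]$ lie in $G'$, and I want their bounded products, composed with the quotient $G' \to G'/G''$, to be a generating family of morphisms (from $\A^2$ or a curve inside it) for the unipotent group $G'$ of smaller dimension; then I apply the inductive hypothesis to $G'$. Concretely, Lemma~\ref{bracket} says that for generic $\gamma_1$ the map $\gamma_2 \mapsto [\gamma_1,\gamma_2]$ hits the complement of any given $VG''$, so by choosing $\gamma_1$ of the form $f_i(s)$ for suitable $s$ (using that the $f_i$ are generating, so their images are not confined to proper subgroups) and $\gamma_2 = f_j(t)$, I can arrange that for every proper subspace $V \subsetneq G'/G''$ some such commutator map is non-constant into $G'/VG''$. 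That produces a generating set of morphisms $\A^1 \to G'$ (restricting the two-variable commutator to a suitable line, or treating it as finitely many one-parameter families), and induction finishes. The main obstacle is the bookkeeping in this last step: one must simultaneously (a) ensure the commutator-built morphisms land in $G'$ and form a \emph{generating} set there, handling every hyperplane $V$ of $G'/G''$ uniformly with finitely many morphisms, and (b) keep the number of factors bounded — the worry being that extracting a single commutator $[a,b] = a b a^{-1} b^{-1}$ seems to require inverses, which the theorem does not allow. I expect this is resolved by noting that in a unipotent group $a^{-1}$ is itself a value of the relevant one-parameter subgroup through $a$ (since $\exp(-X) = \exp(X)^{-1}$ and one-parameter subgroups are morphisms $\A^1 \to G$), or more robustly by working with the $\bar f_i$-generation already established to absorb the inverses: having covered $\bar G(K)$ with positive (inverse-free) products, one covers $a^{-1} \bmod G'$ and then corrects by an element of $G'$, folding the correction into the $G'$-covering. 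Assembling these, the total number $M$ of factors is bounded in terms of the $M_0$ from $\bar G$, the bound from the inductive hypothesis on $G'$, and the fixed number of commutators needed, completing the induction.
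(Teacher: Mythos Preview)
Your inductive skeleton matches the paper's: handle the vector-group case for $G/G'$, manufacture a generating family of morphisms $\A^1 \to G'$, and recurse. You also correctly locate the real difficulty --- extracting commutators without inverses --- and this is indeed the crux. But neither of your proposed fixes works, and the idea that actually resolves it is missing from your write-up.

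Fix (i) fails outright. The $f_i$ are morphisms of \emph{varieties}, not one-parameter subgroups; there is no reason for $f_i(K)$ to be closed under inversion. Already for $G=\bG_a$ and $f(x)=x^2$ over a nonreal field that is not quadratically closed (say $\Q_p$), $f(K)$ is the set of squares, not all of $K$. Fix (ii), as you state it, is circular. Replacing $a^{-1}$ by a bounded positive product $p$ with $\bar p = \bar a^{-1}$ only gives $ap\in G'$; to convert $abpq$ (with $\bar q=\bar b^{-1}$) into the commutator $[a,b]$ you must still correct by the $G'$-element $[a,b]^{-1}abpq$, and hitting arbitrary elements of $G'$ is precisely what you are trying to prove. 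Worse, you need these replacements as \emph{morphisms} in the parameter $x$, not pointwise, and you do not explain how a product $p$ depending on $a=f_i(x)$ can be chosen polynomially in $x$.

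The missing device, which is where the nonreal hypothesis enters a second time, is the following. The vector-group proposition (Proposition~\ref{twisted}) yields, in particular, elements $a_1,\dots,a_N\in K$, not all zero, with $\sum_i a_i^k = 0$ for $k=1,\dots,d$. If $d\ge\deg\bar f_1$ this forces
\[
\bar f_1(a_1 x)+\cdots+\bar f_1(a_N x)=N\bar f_1(0)\quad\text{for all }x,
\]
so the morphism $x\mapsto f_1(a_1 x)\cdots f_1(a_N x)$ lands in a single $G'$-coset. Left-multiplying by a bounded positive product $g_{r+1}$ of $f$-values (available by the vector-group case for $G/G'$) pushes it into $G'$ itself: an inverse-free morphism $\A^1\to G'$. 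Now take a fixed bounded positive product $g$ with $\bar g=\bar\gamma_1$ (the element supplied by Lemma~\ref{bracket}) and insert it in two positions --- either just before the factor $f_1(a_1 x)$ or just after it. The two resulting morphisms $\A^1\to G'$ differ, modulo $G''$, by exactly the commutator $[f_1(a_1 x),g]$, which Lemma~\ref{bracket} guarantees is non-constant modulo $WG''$. Hence at least one of the two is non-constant there, and iterating enlarges the span $W$ until the family is generating for $G'$. No inverses are ever taken; this is the content of Proposition~\ref{ti-field-ind}.
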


The proof occupies the rest of this section.  It depends on the following two propositions:

\begin{prop}
\label{ti-field-vg}
Theorem \ref{ti-field} holds when $G$ is a vector group.
\end{prop}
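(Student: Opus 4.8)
The plan is to reduce to the case of a single morphism into $\bG_a$ and then invoke Kamke's polynomial Waring theorem together with the fact that over a nonreal field every element is a sum of a bounded number of squares. Write $G \cong \bG_a^d$ as a vector group, and let $\bar f_i \colon \A^1 \to \bG_a^d$ be the given morphisms; each $\bar f_i$ is given by a tuple of polynomials $(f_{i,1}(t),\ldots,f_{i,d}(t))$ with coefficients in $K$. The hypothesis that $\{f_1,\ldots,f_n\}$ is generating means, by the criterion following Proposition~\ref{criteria} specialized to the abelian case $G = G/G'$, that for every proper subspace $V \subsetneq K^d$ at least one $\bar f_i$ has non-constant composition with $K^d \to K^d/V$; equivalently, the span of the non-constant parts of all the coordinate polynomials $f_{i,j}$, viewed as vectors in $K^d$, is all of $K^d$. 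So after a $K$-linear change of coordinates on $\bG_a^d$ (which does not affect the problem) we may pick $d$ of the coordinate functions, say one monomial-free-of-constant-term direction per basis vector, and it suffices to handle $\bG_a$ one coordinate at a time, summing the bounds.

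The core case is therefore $G = \bG_a$ and a single non-constant polynomial $g(t) \in K[t]$ of degree $m \ge 1$; I want a bound $M$ so that every element of $K$ is a sum of $M$ values $g(a_j)$, $a_j \in K$. First, by translating $g$ (allowed, since $g(t+c)$ ranges over the same set of values and translation by a constant in $\bG_a$ is harmless after adjusting $M$ by one more term), assume $g(0) = 0$, so $g(t) = c_m t^m + \cdots + c_1 t$. Over the nonreal field $K$ there is a constant $s = s(K)$ (the level of $K$, which is finite) such that $-1$, hence every element of $K$, is a sum of $s$ squares; more to the point, I will use the classical identity-based fact that over such a field the image of $t \mapsto t^m$ under bounded sums is all of $K$ when $m$ is a prime power, and in general combine this with Kamke's theorem \cite{Ka}, which asserts exactly that for any polynomial $g$ with $g(0)=0$ whose values are not confined to a proper subgroup, a bounded number of values of $g$ sum to every element — but Kamke's theorem as usually stated is over $\Z$, so instead I would argue directly: write $g(t) = c_m t^m(1 + (\text{lower order}))$ and use a change of variable plus the nonreal hypothesis to absorb the lower-order terms.

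The cleanest route, which I expect to be the actual argument, is: since $K$ is nonreal, by a theorem on the solvability of $x_1^m + \cdots + x_r^m = y$ over such fields (for suitable $r = r(m,K)$ — provable by a dimension/Lang-type or explicit-identity argument, or cited), the leading term $c_m t^m$ already generates $K$ as a bounded sum of values $c_m a^m$; then one handles the lower-order corrections by a telescoping/successive-approximation trick, choosing the $a_j$ so that the degree-$m$ contributions produce the desired target while the contributions of degree $< m$, summed over the chosen points, are forced into a subspace one can then clear by induction on $m$. Carrying this out coordinate-by-coordinate and summing the per-coordinate bounds yields a single $M$ working for all of $\bG_a^d$, which is the assertion of the proposition.

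The main obstacle is the lower-order-term bookkeeping in the single-variable case: ensuring that when one fixes the points $a_j$ to hit a prescribed value with the top-degree part, the uncontrolled contributions from the intermediate-degree coefficients can be corrected within the same bounded budget. I expect this to be handled either by an inductive argument on $\deg g$ (peeling off the top degree, correcting, recursing) or by the observation that over a nonreal field one has enough "room" — genuinely $m$ free parameters' worth — to solve the relevant system of polynomial conditions, which is where the nonreality (equivalently, the absence of a real ordering obstructing sign-definite polynomials) is essential and where the argument would fail over $\R$ or $\Q$.
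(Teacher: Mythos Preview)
Your reduction to the one-dimensional case is where the argument breaks. Consider the simplest nontrivial example: $G=\bG_a^2$, $n=1$, $f_1(t)=(t,t^2)$. This is generating, but there is only one morphism and its two coordinates are tied together; no linear change of coordinates lets you ``handle $\bG_a$ one coordinate at a time.'' To represent $(c_1,c_2)$ you must solve $\sum_j t_j=c_1$ and $\sum_j t_j^2=c_2$ \emph{simultaneously} with a bounded number of summands. Your proposal never addresses this coupling; ``summing the bounds'' coordinatewise would require morphisms that move one coordinate while fixing the others, and the generating hypothesis does not supply those.

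This coupling is exactly what the paper isolates and resolves. The key step (Proposition~\ref{twisted}) is the assertion that over a nonreal field there exists $N$ such that the map $(x_1,\ldots,x_N)\mapsto\bigl(\sum x_j,\sum x_j^2,\ldots,\sum x_j^d\bigr)$ is surjective onto $K^d$; its proof uses Ellison's theorem on sums of $d$th powers together with a short semiring argument. Once you know the power sums $y_k=\sum_j x_{j}^k$ can be prescribed \emph{independently}, the original problem becomes a linear system in the $y_{jk}$, and the generating hypothesis is precisely the condition that this linear system has full rank. Your sketch of the single-polynomial $\bG_a$ case (leading term first, then correct lower orders) is also incomplete---the ``lower-order bookkeeping'' you flag as an obstacle is real, and your outline does not resolve it---but even if it did, it would not suffice, because the multidimensional statement does not reduce to it.
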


\begin{prop}
\label{ti-field-ind}
Under the hypotheses of Theorem~\ref{ti-field}, there exists an integer $m$, a sequence of elements $g_1,\ldots,g_m\in G(K)$, a sequence of positive integers $k_1,\ldots,k_m$,
for each $i\in \{1,\ldots,m\}$, a sequence of integers $\ell_{i,1},\ldots,\ell_{i,k_i}\in [1,n]$ and of $a_{i,j},b_{i,j}\in K$, such that for each $i\in \{1,\ldots,m\}$, the $K$-morphisms $h_1,\ldots,h_m\colon \A^1\to G$ defined by
$$h_i(x) := g_if_{\ell_{i,1}}(a_{i,1}x+b_{i,1})\cdots f_{\ell_{i,k_i}}(a_{i,k_i}x+b_{i,k_i})$$
map $\A^1\to G'$ and as morphisms to $G'$ are generating.
\end{prop}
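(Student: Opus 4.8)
The plan is to produce the morphisms $h_i$ by taking suitable commutator-type words in the $f_j$'s, reparametrized affinely, and then to check that these words land in $G'$ and satisfy the generating criterion of the last lemma of \S1, now applied to $G'$ in place of $G$. Here is the strategy in order. First, by translating each $f_j$ on the left by $f_j(0)^{-1}$ (which, as noted after that lemma, changes neither the generating property nor the group generated by the images, up to bounded powers), we may assume each $f_j$ maps $0$ to $e$; absorbing the translates into the constants $g_i$ and the shifts $b_{i,j}$ at the end. Then for any pair of indices $j,k$ and any scalars, the word $f_j(x) f_k(y) f_j(x)^{-1} f_k(y)^{-1}$ is a morphism $\A^2\to G'$, because modulo $G'$ it is the trivial character. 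Restricting such a word to an affine line $\{(a x + b, a' x + b')\}$ in $\A^2$ gives a morphism $\A^1\to G'$ of precisely the shape allowed in the statement (expand the inverses $f_j(\cdot)^{-1}$ as a product of $f$'s using that $f_j(K)^{-1}\subseteq f_j(K)^{N_0}$ for a fixed $N_0$, which is legitimate since $\{f_1,\dots,f_n\}$ is generating and hence, by Theorem~\ref{ti-field} applied inductively in lower dimension — or more elementarily by the vector-group case — every element, in particular $f_j(t)^{-1}$, is a bounded word in the $f_\ell(K)$; actually one only needs this for the abelianization, where it is immediate).

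The second and main step is to choose finitely many such commutator words whose images generate $G'$ in the sense of the \S1 criterion applied to $G'$: for every proper subspace $V\subsetneq G'/G''$, at least one of the chosen $h_i$, composed with $G'\to G'/VG''$, must be non-constant. This is exactly what Lemma~\ref{bracket} is designed to deliver. That lemma says that for each proper $V\subsetneq G'/G''$ there is a dense open $U_1\subseteq G$ and, for $\gamma_1\in U_1(K)$, a set $U_2 = G\setminus WG'$ with $[\gamma_1,\gamma_2]\notin VG''(K)$ for $\gamma_2\in U_2(K)$. We want to realize $\gamma_1 = f_j(a_1 x + b_1)\cdots$ and $\gamma_2 = f_k(\cdots)$ as values of the $f$'s along a line; here the generating hypothesis on $\{f_1,\dots,f_n\}$ enters crucially, since it guarantees that some bounded product of reparametrized $f$'s hits the dense open set $U_1$, and then that a single $f_k$ composed with $G\to G/WG'$ is non-constant so its image meets $U_2$. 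Because $G'/G''$ is a finite-dimensional vector space and a morphism into it has constructible (hence, for curves, either finite or cofinite) image in each quotient, one can pass from "for every proper $V$ there is a word that is non-constant mod $VG''$" to a \emph{finite} list $h_1,\dots,h_m$ doing the job simultaneously: the "bad" subspaces $V$ for a given word form a closed subvariety of the Grassmannian/projective space of hyperplanes, and finitely many words suffice to cover all of it since the intersection over all chosen words of their bad loci is empty.

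The third step is bookkeeping: collect the finitely many commutator words chosen in step two, expand each inverse $f_j(a x+b)^{-1}$ into a bounded product of terms $f_\ell(a' x + b')$ (recording the indices $\ell_{i,1},\dots,\ell_{i,k_i}\in[1,n]$ and the affine data $a_{i,j},b_{i,j}$), prepend the constant $g_i\in G(K)$ that accounts for the earlier normalization $f_j(0)=e$, and observe that the resulting $h_i$ have image in $G'$ and form a generating family for $G'$. I expect the main obstacle to be step two: making the "for every proper $V$, some word works" statement uniform enough to extract a single finite generating family. The subtlety is that the commutator word depends on $\gamma_1$, which itself must be chosen inside a $V$-dependent dense open $U_1\subseteq G$; one must check that a \emph{fixed} finite set of reparametrized products of the $f_j$ already meets $U_1$ for \emph{every} proper $V$ — equivalently, that the images of these products are not all contained in any single proper closed subset of the relevant form. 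This follows from the generating hypothesis together with the fact (Proposition~\ref{criteria}, condition II) that the projections to the various one-codimensional quotients are non-constant, but assembling it cleanly, rather than the commutator computation itself, is where the real work lies.
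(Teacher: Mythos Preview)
Your proposal has the right overall shape---use Lemma~\ref{bracket} to build commutator-type words landing in $G'$ that are non-constant modulo each proper $VG''$---but there is a genuine gap in the ``bookkeeping'' step, and it is precisely where the paper's main technical idea enters.

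The problem is your expansion of $f_j(ax+b)^{-1}$ as a bounded product $\prod_\ell f_{\ell}(a_\ell'x+b_\ell')$ with \emph{affine} arguments. You offer two justifications, and neither works. Invoking Theorem~\ref{ti-field} is circular: that theorem is proved using the present proposition. Invoking the vector-group case only tells you something modulo $G'$; even granting that, it gives you, for each fixed $t$, an expression $f_j(t)^{-1}=\prod f_{\ell_i}(c_i(t))$, but nothing forces the $c_i$ to be affine in $t$---and the statement of the proposition requires exactly that form. Your parenthetical ``one only needs this for the abelianization, where it is immediate'' explains why the commutator word lands in $G'$, but not why it can be rewritten in the required inverse-free shape.

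The paper avoids inverses altogether. The key device is Proposition~\ref{twisted}: choose $a_1,\dots,a_N\in K$, not all zero, with $\sum_i a_i^k=0$ for $1\le k\le d$. Then $x\mapsto f_1(a_1x)\cdots f_1(a_Nx)$ has \emph{constant} image in $G/G'$, so after left-multiplying by a suitable constant $g_{r+1}$ it maps into $G'$---no inverse needed. To inject a commutator with a chosen $\gamma_1$, the paper first writes $\gamma_1$ modulo $G'$ as a fixed product $g=f_{s_1}(b_1)\cdots f_{s_M}(b_M)$ (this uses only Proposition~\ref{ti-field-vg}), and then compares the two words obtained by placing $f_1(a_1x)$ immediately before or immediately after $g$. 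Their ratio is $[f_1(a_1x),g]$, which by Lemma~\ref{bracket} lies outside $WG''$ for all but finitely many $x$; hence at least one of the two placements gives an $h_{r+1}$ that is non-constant modulo $WG''$. Finally, the paper extracts a finite family not by a Grassmannian compactness argument but by the simpler induction on the codimension of the span $W$ of the $\bar h_i(t)-\bar h_i(0)$ in $G'/G''$: each new $h_{r+1}$ strictly enlarges $W$, so the process terminates in at most $\dim G'/G''$ steps.
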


Assuming both propositions hold, we can prove Theorem~\ref{ti-field} by induction on dimension.  If $G$ is commutative, then Proposition~\ref{ti-field-vg} applies.
Otherwise, we apply Proposition~\ref{ti-field-ind} to construct $h_1,\ldots,h_m$.  
Letting $\bar f_i$ denote the composition of $f_i$ with $G\to G/G'$,
Proposition~\ref{ti-field-vg} asserts that
every element of $G(K)/G'(K) = (G/G')(K)$ is represented by a bounded product of elements
of $\bar f_1(K)\cup\cdots\cup \bar f_n(K)$.  For each $g_i$, there exists $g'_i$, which is a bounded product of
elements of $f_1(K)\cup\cdots\cup f_n(K)$ and lies in the same $G'(K)$-coset of $G(K)$.  Defining
$$h'_i(x) := g'_if_{\ell_{i,1}}(a_{i,1}x+b_{i,1})\cdots f_{\ell_{i,k_i}}(a_{i,k_i}x+b_{i,k_i}),$$
it suffices to prove that every element of $G'(K)$ is a bounded product of elements of $h'_1(K)\cup\cdots\cup h'_m(K)$.
As the $h_i$ are generating for $G'$, the same is true for $h'_i$, and the theorem follows by induction.
Thus, we need only prove Propositions \ref{ti-field-vg} and \ref{ti-field-ind}.  

To prove Proposition~\ref{ti-field-vg}, we begin with a special case.
\begin{prop}
\label{twisted}
If $K$ is a characteristic zero field which is not formally real and $d$ is a positive integer, there exists an integer $N > 0$ such that
every vector in $K^d$ is a sum of elements of 
$$X^d_1 := \{(x,x^2,\ldots,x^d)\mid x\in K\}.$$
\end{prop}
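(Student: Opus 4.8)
The plan is to reduce the statement to the classical solution of Waring's problem over number fields — specifically, to the fact that over a nonreal field $K$ every element is a bounded sum of $d$th powers — by exploiting the interplay between the coordinates of the curve $X^d_1$ under translation of the parameter $x$.

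First I would observe that it suffices to hit a single well-chosen vector up to the span: more precisely, the set $S$ of vectors expressible as a bounded sum of elements of $X^d_1$ is closed under addition, and it is translation-invariant in the following sense. Replacing $x$ by $x+t$ in $(x,x^2,\ldots,x^d)$ produces a vector each of whose coordinates is a polynomial in $x$ whose leading term agrees with the untranslated one but whose lower-order terms involve $t$; the difference of the translated and untranslated points is a vector all of whose coordinates have degree $\le d-1$ in $x$, with $t$-dependent coefficients. Iterating finite differences $v(x+1)-v(x)$, or taking suitable integer linear combinations $\sum_j c_j\,(x+j,\ldots,(x+j)^d)$, lets one isolate, coordinate by coordinate, the vectors $e_d\cdot x^d$, then $e_{d-1}\cdot(\text{something})$, and so on. The key algebraic point is that the matrix recording how the monomials $1,x,\ldots,x^d$ reshuffle under the $d+1$ shifts $x\mapsto x+j$ ($j=0,\ldots,d$) is (upper-triangular unipotent times) a Vandermonde-type matrix, hence invertible over $\Q$; so by taking rational — and then, clearing denominators, integer — combinations of translated copies of the curve we can realize, inside $S-S$, every vector of the form $(0,\ldots,0,c\,x^k)$ for a fixed nonzero integer $c$ and every $k\le d$, as $x$ ranges over $K$.

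Next I would feed in Siegel's theorem (equivalently, the hypothesis that $K$ is nonreal, which makes $-1$ a sum of squares and kills the positivity obstruction): every element of $K$ is a bounded sum of $k$th powers for each fixed $k\le d$. Combined with the previous step, this shows that each standard basis vector $e_k$, scaled by the fixed nonzero constant $c^{k}$ or some fixed nonzero constant, lies in a bounded number of sums and differences of points of $X^d_1$. Since $c$ is a nonzero element of a field, rescaling is harmless, and summing over $k=1,\ldots,d$ we conclude that every vector in $K^d$ is a bounded sum of elements of $X^d_1\cup(-X^d_1)$. Finally, to remove the inverses I would use that $-X^d_1$ is itself contained in a bounded sum of copies of $X^d_1$: the vector $-(x,x^2,\ldots,x^d)$ differs from $((-x)^1,\ldots,(-x)^d)$ only in the odd coordinates by a factor of $2$ times the corresponding monomial, and those odd-coordinate corrections were already shown to be bounded sums of elements of $X^d_1$ by the finite-difference argument; alternatively one simply notes $0\in \overline{X^d_1}$ and runs the whole argument with $X^d_1$ in place of $X^d_1\cup(-X^d_1)$ from the start, since the odd/even parity of each monomial is controlled.

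The main obstacle I expect is the bookkeeping in the finite-difference / Vandermonde step: one must be careful that the integer linear combinations used to isolate $e_k x^k$ do not reintroduce the lower monomials, and that the number of translated copies of the curve used stays bounded independently of $x$ (it depends only on $d$). Making the induction on $k$ (from $k=d$ downward) clean — peeling off the top monomial, absorbing the error into already-handled lower monomials — is the delicate part; everything else (translation invariance of $S$, the appeal to Siegel, the nonreal hypothesis removing positivity) is routine.
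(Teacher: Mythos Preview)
Your approach naturally proves the \emph{easier} version of the proposition (this is essentially Proposition~\ref{diagonal-equations-in-easier-W-S3} in the paper): a bounded number of sums \emph{and differences} of points of $X^d_1$ gives all of $K^d$. The genuine gap is the removal of inverses, which you treat as a bookkeeping afterthought but is in fact where the entire difficulty lies.

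Concretely, your finite-difference/Vandermonde step produces identities of the form $\sum_j n_j\,v(a_j x)=C_k x^k e_k$ (or the translation analogue) with $n_j\in\Z$, and the $n_j$ are \emph{forced} to have mixed signs: a relation $\sum_j n_j\lambda_j^m=0$ for some $m$ with all $\lambda_j^m$ nonzero in a field cannot hold with all $n_j$ of one sign. So after invoking Ellison you obtain $K e_k\subset X^d_A-X^d_B$, hence $K^d=X^d_{A'}-X^d_{B'}$, but not $K^d=X^d_N$. Your proposed fix, writing $-(x,x^2,\ldots,x^d)=v(-x)+(\text{correction})$ and asserting the correction is a bounded \emph{sum} of curve points, is circular: the only tool you have for producing the correction vector $(0,-2x^2,0,-2x^4,\ldots)$ is the very difference construction you are trying to eliminate. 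The claim that $-X^d_1$ lies in a bounded sumset $X^d_N$ is in fact equivalent to the full proposition (if $-X^d_1\subset X^d_N$ then $X^d_A-X^d_B\subset X^d_{A+NB}$), so it cannot be discharged by parity considerations alone. The alternative suggestion of ``running the argument with $X^d_1$ from the start'' does not make sense, since every step of the Vandermonde isolation uses signed combinations.

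The paper's proof circumvents this by an entirely different mechanism: it exploits the \emph{multiplicative} structure $X^d_i\cdot X^d_j\subset X^d_{ij}$ (coordinatewise product of two curve points is a curve point) to make $X^d=\bigcup_k X^d_k$ a semiring, and runs an induction on $d$. The inductive step is a dichotomy: either the projection $p^{d+1}\colon X^{d+1}\to X^d=K^d$ fails to be injective, in which case one finds a nonzero $(0,\ldots,0,c)\in X^{d+1}_{2M+1}$ and multiplies it against $X^{d+1}_M$ (using the semiring product, not subtraction) to capture $Ke_{d+1}$; or $p^{d+1}$ is a ring isomorphism, forcing a field endomorphism $\psi\colon K\to K$ with $\psi(2^i)=2^{d+1}$, which is absurd. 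This multiplicative trick is what lets the paper avoid ever introducing minus signs; your argument has no analogue of it.

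A secondary issue: with integer translations $x\mapsto x+j$ you cannot isolate $C_k x^k e_k$ for $k<d$, since killing all coordinates $m<d$ already forces $\sum_j c_j=0$, which also kills the top-degree term in every coordinate; at best you extract constant multiples of $e_d$. Scaling $x\mapsto\lambda x$ (or translating by elements of $K$ rather than $\Z$) repairs this, but it does not touch the main gap above.
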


\begin{proof}
For each integer $k > 0$, let
$$X^d_k := \underbrace{X^d_1+\cdots+X^d_1}_k.$$
Thus $X^d_1\subseteq X^d_2\subseteq\cdots$, and we denote by $X^d$ the limit $\bigcup_i X^d_i$.  Clearly $X^d_i + X^d_j = X^d_{i+j}$ and $X^d_i X^d_j \subseteq X^d_{ij}$.  Taking unions, $X^d$ is a semiring.  Let
$p^{d+1}\colon X^{d+1}\to X^d$ denote the projection map onto the first $d$ coordinates and $\pi^{d+1}\colon X^{d+1}\to K$ the projection map onto the last coordinate.
In particular, $p^{d+1}(X^{d+1}_k) = X^d_k$ for all positive integers $k$.

It is a theorem \cite[Theorem 2]{Ellison} that for each positive integer $d$ there exists $M$ such that
every element in $K$ is the sum of $M$ $d$th powers of elements of $K$.  

We proceed by induction on $d$.  The theorem is trivial for $d=1$.  Assume it holds for $d$ and choose $M$ large enough that
$X^d_M = X^d = K^d$ and every element of $K$ is the sum of $M$ $(d+1)$st powers.   In particular, 
\begin{equation}
\label{onto}
\pi^{d+1}(X^{d+1}_M) = K.
\end{equation}
If $X^{d+1}_M\subsetneq X^{d+1}_{M+1}$, then choosing $w\in X^{d+1}_{M+1}\setminus X^{d+1}_M$, there exists an element $v\in X^{d+1}_M$ such that $p^{d+1}(v) = p^{d+1}(w)$.  If $u\in X^{d+1}_M$ is chosen with $p^{d+1}(u) = -p^{d+1}(v)$, then either $u+v$ or $u+w$ is non-zero, and either
way there exists an element $t\in X^{d+1}_{2M+1}\setminus \{0\}$ with $p^{d+1}(t)=0$.  By (\ref{onto}), 
$$\{(0,\ldots,0,x)\mid x\in K\}\subset X^{d+1}_{M(2M+1)},$$
so by the induction hypothesis, $X^{d+1}_{M+M(2M+1)} = K^{d+1}$, and we are done.  We may therefore assume
$X^{d+1}_M = X^{d+1}_{M+1}$, which implies $X^{d+1}_M = X^{d+1}$.  Moreover, if $p^{d+1}$ fails to be injective, the same argument applies,
so we may assume that $p^{d+1}$ is an isomorphism of semirings, and therefore an isomorphism of rings (since the target $K^d$ is a ring).

Thus, we can regard $\pi^{d+1} \circ (p^{d+1})^{-1}$ as a ring homomorphism $\phi\colon K^d\to K$.  If $e_i = (0,\ldots,0,1,0,\ldots,0)$, then
$\phi(e_i)$ maps to an idempotent of $K$, which can only be $0$ or $1$.  Since $\phi(e_1+\cdots+e_d) = 1$, there exists  $i$ such that
$\phi(e_i) = 1$, and it follows that $\phi$ factors through projection onto the $i$th coordinate.  Thus, there exists a ring endomorphism $\psi \colon K\to K$
such that for all $(x_1,\ldots,x_{d+1})\in X^{d+1}$, $\psi(x_i) = x_{d+1}$.  As $(2,4,8,\ldots,2^{d+1})\in X^{d+1}$, $\psi(2^i) = 2^{d+1}$, which is absurd.
\end{proof}

We now prove Proposition~\ref{ti-field-vg}.

\begin{proof}
Let 
\begin{equation}
\label{P-vector}
f_j(x) = (P_{1j}(x),\ldots,P_{mj}(x)),
\end{equation}
where $d$ is the maximum of the degrees of the $P_{ij}$ for $1\le i\le m$, $1\le j \le n$.  Let $N$ be chosen as in Proposition~\ref{twisted}.  We write 
\begin{equation}
\label{P-formula}
P_{ij}(x) = \sum_{k = 0}^d a_{ijk}x^k
\end{equation}
for $1\le i\le m$, $1\le j \le n$. Given $c_1,\ldots,c_n$,
our goal is to find $x_{j\ell}\in K$, $1\le j \le n$, $1\le \ell \le N$ that satisfy the system of equations
\begin{equation}
\label{system}
\sum_{j,k,\ell} a_{ijk} x_{j\ell}^k = c_i,\ i=1,2,\ldots,m.
\end{equation}
By Proposition~\ref{twisted}, by choosing $x_{j \ell}$ suitably,
we can choose  the values
$$y_{jk} = \sum_{\ell = 1}^N x_{j \ell}^k$$
independently for $1\le j \le n$ and $1\le k \le d$, 
while $y_{j0} = N$ by definition.  Thus, we can rewrite the system of equations (\ref{system}) as
\begin{equation}
\label{linear-system}
\sum_{k=1}^d \sum_{j = 1}^n a_{ijk} y_{jk} = c_i - N\sum_{j = 1}^n a_{ij0},\ i=1,2,\ldots,m.
\end{equation}
This is always solvable unless there is a non-trivial relation among the linear forms on the left hand side in this system, i.e., a non-zero sequence  $b_1,\ldots,b_m$
such that
$$\sum_{i = 1}^m b_i a_{ijk} = 0$$
for all $j$ and $k\ge 1$.  If this is true, then
$$\sum_{i = 1}^m b_i P_{ij} = \sum_i b_i a_{ij0},\ j=1,\ldots,n.$$
In other words, defining $\pi(t_1,\ldots,t_n) := b_1 t_1 + \cdots + b_m t_m$, $\pi\circ f_j$ is constant for all $j$, contrary to assumption.
\end{proof}

Finally, we prove Proposition~\ref{ti-field-ind}.

\begin{proof}
Suppose we have already constructed $h_1,\ldots,h_r$.  Let $\bar h_i$ denote the composition of $h_i$ with the projection $G'\to G'/G''$.  Let $W$ denote the vector space spanned by the set $\{\bar h_i(t) - \bar h_i(0)\mid 1\le i\le r,\;t\in K\}$.  Suppose $W = G'/G''$.  Then for all proper subspaces $W'\subsetneq W$ there exist $i$ and $t$ such that $\bar h_i(0)$ and $\bar h_i(t)$ represent different classes in $W/W'$.  It follows that the composition of $h_i$ with the projection $G'\to G'/W'G''$ is non-constant, and therefore, $\{h_1,\ldots,h_r\}$ is a generating set of morphisms to $G'$.

Thus, we may assume that $W$ is a proper subspace of $G'/G''$.  We apply Lemma~\ref{bracket} to deduce the existence
of $\gamma_1\in G(K)$ and a proper closed subspace $V$ of $G/G'$ such that for all $\gamma_2\in G(K)\setminus VG'(K)$, 
the commutator of $\gamma_1$ and $\gamma_2$ is not in
$WG''(K)$.  Let $\bar f_i(x)$ denote the composition of $f_i(x)$ with the quotient map $G\to G/G'$.
As the $f_i$ are generating, there exists $i$ such that for all but finitely many values $x\in K$, $\bar f_i(x)$ is not in $V$.  Without loss of generality, we assume $i=1$.

By Proposition~\ref{ti-field-vg}, there exists a bounded product 
$$g=f_{s_1}(b_1)\cdots f_{s_M}(b_M),\;s_i\in \{1,2,\ldots,n\},\;b_i\in K$$
such that
$$\bar f_{s_1}(b_1)+\cdots +\bar f_{s_M}(b_M)=\bar\gamma_1.$$

We write
$$\bar f_1(x) = v_0 + xv_1 + \cdots + x^d v_d,$$
with $v_i \in K^m$.  
By Proposition~\ref{twisted}, there exist $a_1,\ldots,a_N\in K$, not all zero, such that
\begin{equation}
\label{a-condition}
\sum_{i=1}^n a_i^k = 0,\ k=1,2,\ldots,d.
\end{equation}
Thus,
$$\bar f_1(a_1x) + \bar f_2(a_2x) + \bar f_3(a_3x) + \cdots + \bar f_N(a_N x) = Nv_0,$$
which means that $x\mapsto f_1(a_1x)f_2(a_2x)\cdots f_N(a_nx)$ goes to a constant $G'$-coset.
Without loss of generality, we may assume $a_1\neq 0$.

Let $g_{r+1}$ be an element of $G(K)$ which can be realized as a product of at most $N$  values of $f_i$ at elements of $K$ and such that
\begin{equation}
\label{try1}
g_{r+1}f_{s_1}(b_1)\cdots f_{b_M}(b_M) f_1(a_1x)f_2(a_2x)\cdots f_N(a_nx)
\end{equation}
belongs to $G'(K)$ for $x=0$.  By Proposition~\ref{ti-field-vg}, such a $g_{r+1}$ exists.
We choose $h_{r+1}(x)$ to be either (\ref{try1}) or
\begin{equation}
\label{try2}
g_{r+1}f_1(a_1x)f_{s_1}(b_1)\cdots f_{s_M}(b_M) f_2(a_2x)\cdots f_N(a_nx),
\end{equation}
Either way, $h_{r+1}(0)\in G'(K)$.  By (\ref{a-condition}), $h_{r+1}(x)\in G'(K)$ for all $x\in K$.
Because the commutator $[f_1(a_1x),f_{s_1}(b_1)\cdots f_{s_M}(b_M)]$   lies in $WG''$
for at most finitely many $x$-values, at least one of (\ref{try1}) and (\ref{try2}) is non-constant (mod $WG''$).
By induction on the codimension of $W$, the proposition follows.

\end{proof}

\section{The unipotent Waring Problem over totally imaginary number rings}

In this section $K$ denotes a totally imaginary number field, $\cO$ its ring of integers, and $\cG$ a closed $\cO$-subscheme of the group scheme
$\cU_k$ of unitriangular $k\times k$ matrices.  Thus the generic fiber $\cG_K$ will be a closed subgroup of $\cU_k$ over $K$ and
therefore unipotent.  Moreover, there is a filtration of $\cG(\cO)$ by normal subgroups such that the successive quotients are finitely generated free abelian groups.
In particular, it is (by definition) a finitely generated torsion-free nilpotent group, whose Hirsch number is the sum of the ranks of these successive quotients.

A set $\{f_1,\ldots,f_n\}$ of $\cO$-morphisms $\A^1\to \cG$ is said to be generating if it is so over $K$.
The main theorem in this section is the following integral version of Theorem~\ref{ti-field}:

\begin{thm}
\label{ti-ring}
If  $\{f_1,\ldots,f_n\}$ is a generating set of $\cO$-morphisms $\A^1\to \cG$, then for some positive integer $M$,
$$(f_1(\cO)\cup\cdots\cup f_n(\cO))^M$$
is a subgroup of finite index in $\cG(\cO)$.
\end{thm}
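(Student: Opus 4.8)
The plan is to mirror the structure of the proof of Theorem~\ref{ti-field}, carrying out an induction on the Hirsch length of $\cG(\cO)$, but replacing every appeal to Proposition~\ref{ti-field-vg} (the vector group case over a nonreal field) by Siegel's solution of Waring's problem over the totally imaginary ring $\cO$, and replacing every ``equality of subgroups'' by ``finite index subgroup.'' First I would establish the vector group base case: if $\cG \cong \bG_a^m$ as an $\cO$-scheme and $\{f_i\}$ is generating, then one writes $f_j(x) = (P_{1j}(x),\dots,P_{mj}(x))$ with $P_{ij} \in \cO[x]$, and one wants to solve, for every target $\bc = (c_1,\dots,c_m) \in \cO^m$ in a finite index sublattice, a system $\sum_{j,k,\ell} a_{ijk} x_{j\ell}^k = c_i$ with $x_{j\ell} \in \cO$. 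By Siegel's theorem, for each $k$ the $k$th power sums $\sum_\ell x_{j\ell}^k$ range over a finite index sublattice of $\cO$ (using sufficiently many $\ell$'s and allowing oneself to also add a fixed correction term coming from lower powers); as in the field case, the generating hypothesis guarantees that the linear forms $\sum_i b_i a_{ijk}$ ($k \ge 1$) on the left-hand side span, so after inverting a bounded denominator the linear system is solvable. Working with $\cO$ rather than $K$ costs only a finite index, which is exactly what the theorem allows.

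The inductive step proceeds exactly as in \S2. Let $\cG' = [\cG,\cG]$, so $\cG(\cO)/(\cG(\cO) \cap \cG'(K))$ injects into the vector group $(\cG/\cG')(\cO)$ with finite-index image. Apply the integral analogue of Proposition~\ref{ti-field-ind}: by combining, translating, and rescaling the $f_i$ by elements $a_{i,j}, b_{i,j} \in \cO$ one produces morphisms $h_1,\dots,h_m \colon \A^1 \to \cG'$ over $\cO$ which are generating as morphisms to $\cG'$. The construction of the $h_i$ is word-for-word the one in the proof of Proposition~\ref{ti-field-ind}: one uses Lemma~\ref{bracket} to find $\gamma_1$ and a proper subspace $V$ with $[\gamma_1, \gamma_2] \notin WG''$ whenever $\gamma_2 \notin VG'$, one realizes $\bar\gamma_1$ as a bounded sum of $\bar f_i$-values (now over $\cO$, via the vector group case just proved), and one uses Proposition~\ref{twisted}'s integral analogue --- the existence of $a_1,\dots,a_N \in \cO$, not all zero, with $\sum a_i^k = 0$ for $1 \le k \le d$, which follows from the same limit/semiring argument applied to the Siegel sums, or more simply from taking the $a_i$ to be a nonreal/telescoping combination of $\pm 1$ padded out appropriately. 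Then, as in \S2, the bounded product $(f_1(\cO) \cup \dots \cup f_n(\cO))^M$ surjects onto a finite index subgroup of $(\cG/\cG')(\cO)$ and, after left-translating the $h_i$ by bounded products $g_i'$ of $f$-values in the correct cosets, it suffices to show $(h_1'(\cO) \cup \dots \cup h_m'(\cO))^{M'}$ contains a finite index subgroup of $\cG'(\cO)$, which is the inductive hypothesis for the group $\cG'$ of strictly smaller Hirsch length.

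The main obstacle I expect is \emph{keeping track of finite indices through the induction and through the $\cG' \to \cG$ extension}. Over a field, ``$=G(K)$'' is stable under the operations used (passing to $G/G'$, restricting to $G'$, translating), but ``finite index subgroup of $\cG(\cO)$'' requires care: one must check that if $A \subseteq \cG'(\cO)$ has finite index in $\cG'(\cO)$ and $B \subseteq \cG(\cO)$ surjects onto a finite index subgroup of $(\cG/\cG')(\cO)$, then $BAB$ (or some bounded product) has finite index in $\cG(\cO)$ --- this uses that $\cG'(\cO)$ is normal of finite index in $\cG(\cO) \cap \cG'(K)$ up to finite index, that $\cG(\cO) \cap \cG'(K)$ has finite index in the preimage of the image, and that all these groups are finitely generated nilpotent so that ``finite index'' behaves well under the short exact sequence. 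The secondary subtlety is that $f(\cO)$ need not contain $f(0)^{-1}$, so the argument must stay purely multiplicative (no inverses), exactly as in \S2; the translation trick in Proposition~\ref{ti-field-ind} (choosing $h_i$ to be \eqref{try1} or \eqref{try2}) is what avoids needing inverses, and it carries over unchanged. A minor point is that Proposition~\ref{twisted} and its role in making the ``twisted power sums'' $y_{jk}$ freely choosable must be replaced by its integral version, where one only gets a finite index sublattice of each $\cO$; since finite index is permitted in the conclusion, this is harmless, but it should be stated as a separate lemma before the main induction.
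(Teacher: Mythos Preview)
Your proposal is correct and follows essentially the same approach as the paper: the commutative base case via Siegel (the paper's Proposition~\ref{ti-ring-comm}), the integral version of Proposition~\ref{ti-field-ind} obtained by clearing denominators on $\gamma_1$, the $a_i$, and $g_{r+1}$, and then induction on $\dim G$. The ``main obstacle'' you flag---passing from finite index in $\cG'(\cO)$ and in $(\cG/\cG')(\cO)$ to finite index in $\cG(\cO)$---is exactly what the paper isolates and proves carefully as Proposition~\ref{fi-ext} (together with Lemmas~\ref{Smaller} and the lemma preceding it), so your instinct that this extension step needs its own lemma is right on target.
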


We begin by proving results that allow us to establish that some power of a subset of a group $\Gamma$ gives a finite index subgroup of $\Gamma$.

\begin{lem}
Let $\Gamma$ be a group, $\Delta$ a finite index subgroup of $\Gamma$, $\Xi$ a subset of $\Gamma$, and $M$ a positive integer.
If $\Delta\subseteq \Xi^M$, then there exists $N$ such that $\Xi^N$ is a finite index subgroup of $\Gamma$.
\end{lem}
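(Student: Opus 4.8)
The plan is to produce an $N$ of the shape $N=(k_0+1)M$ and to show that $H:=\Xi^{(k_0+1)M}$ is a finite-index subgroup for a suitable $k_0$. The one genuinely useful consequence of the hypothesis (beyond mere size) is a monotonicity statement: since $\Delta$ is a subgroup, $1\in\Delta\subseteq\Xi^M$, so fixing one expression of $1$ as a product of $M$ elements of $\Xi$ and multiplying on the right by it shows $\Xi^{kM}\subseteq\Xi^{(k+1)M}$ for every $k\ge 0$. (This is exactly the point where we use that $\Xi$ \emph{contains a subgroup}, not that $\Xi$ is a submonoid, which it need not be.) Now fix coset representatives $\gamma_1=1,\gamma_2,\dots,\gamma_r$ for $\Delta$ in $\Gamma$, where $r=[\Gamma:\Delta]$, and let $T_k\subseteq\{1,\dots,r\}$ be the set of indices $i$ with $\gamma_i\Delta\cap\Xi^{kM}\ne\emptyset$. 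By the monotonicity just observed, $T_1\subseteq T_2\subseteq\cdots$ inside the finite set $\{1,\dots,r\}$, so the $T_k$ stabilize: $T_k=T$ for all $k\ge k_0$, and $1\in T$ because $\Delta\subseteq\Xi^M\subseteq\Xi^{k_0M}$ (taking $k_0\ge1$).

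Next I would pin down $\Xi^{(k_0+1)M}$ precisely. It is contained in $\bigcup_{i\in T}\gamma_i\Delta$ by definition of $T_{k_0+1}=T$; conversely, for $i\in T$ choose $y\in\gamma_i\Delta\cap\Xi^{k_0M}$, so that $\gamma_i\Delta=y\Delta\subseteq\Xi^{k_0M}\Xi^M=\Xi^{(k_0+1)M}$, again using $\Delta\subseteq\Xi^M$. Hence $H:=\Xi^{(k_0+1)M}=\bigcup_{i\in T}\gamma_i\Delta$ is a disjoint union of exactly $|T|$ left cosets of $\Delta$. Feeding this back into the monotonicity chain, one gets $H\subseteq\Xi^{kM}\subseteq\bigcup_{i\in T_k}\gamma_i\Delta=H$ for all $k\ge k_0+1$, so $\Xi^{kM}=H$ for every such $k$; in particular $H\cdot H=\Xi^{2(k_0+1)M}=H$, and $1\in\Delta\subseteq H$.

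The last and only substantive step is to upgrade "$H$ is a submonoid'' to "$H$ is a subgroup,'' which is where the finite-index combinatorics does real work (a submonoid of a group need not be a subgroup). For $i\in T$ we have $\gamma_i\in H$, hence $\gamma_iH\subseteq HH=H$; but $\gamma_iH=\bigcup_{j\in T}\gamma_i\gamma_j\Delta$ is itself a disjoint union of exactly $|T|$ left cosets of $\Delta$ sitting inside the disjoint union of $|T|$ left cosets $H$, so $\gamma_iH=H$ and therefore $\gamma_i^{-1}\in\gamma_i^{-1}H=H$. Writing a general $h\in H$ as $\gamma_i\delta$ with $i\in T$ and $\delta\in\Delta$ then gives $h^{-1}=\delta^{-1}\gamma_i^{-1}\in\Delta\,H\subseteq HH=H$, so $H$ is closed under inversion and is a subgroup. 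Since $\Delta\subseteq H$ we conclude $[\Gamma:H]\le[\Gamma:\Delta]<\infty$, and $N:=(k_0+1)M$ does the job. I expect the main obstacle to be precisely this coset-counting argument — getting $\gamma_iH=H$ from $\gamma_iH\subseteq H$ by matching the number of $\Delta$-cosets on the two sides — together with the bookkeeping that makes $H$ a full union of cosets rather than merely a set meeting finitely many of them.
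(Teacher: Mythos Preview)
Your argument is correct. Both your proof and the paper's exploit the same core observation: because $1\in\Delta\subseteq\Xi^M$, the sequence $\Xi^{kM}$ is increasing, and since it can only meet finitely many $\Delta$-cosets it must stabilize to a set $H$ which is simultaneously a union of $\Delta$-cosets and closed under multiplication. The difference lies in how closure under inverses is established. The paper first replaces $\Delta$ by its normal core, so that $\Gamma/\Delta$ is an honest finite group, and then invokes the standard fact that a nonempty multiplicatively closed subset of a finite group is a subgroup; $H$ is then the preimage of that subgroup. You instead keep $\Delta$ arbitrary and work directly with left cosets, using the pigeonhole-style count ``$\gamma_iH\subseteq H$ and both are disjoint unions of $|T|$ cosets of $\Delta$, hence $\gamma_iH=H$'' to force $\gamma_i^{-1}\in H$. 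Your route avoids the reduction to the normal case at the cost of a slightly more hands-on counting step; the paper's route is a bit slicker once normality is in place but requires that initial reduction. Either way the bound one obtains is of the same shape, a multiple of $M$ large enough to have absorbed all the reachable cosets.
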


\begin{proof}
Without loss of generality, we assume that $\Delta$ is normal in $\Gamma$.
Consider the (finite set) 
$$\{(\bar m,\bar \gamma)\in \Z/M\Z\times \Gamma/\Delta\mid m\in \N,\;\gamma\in \Xi^m\}.$$
For each element, choose a pair $(m,\gamma)$, $m\in \N, \gamma\in \Xi^m$ representing it.  Choose $A$ to be greater than all values $m$ appearing in such pairs.  Let 
$N$ be a multiple of $M$ which is greater than $A+M$.  For all positive integers $k$, $\Xi^{kN}$ is a union of cosets of $\Delta$ in $\Gamma$ and does not depend on $k$.
The image of $\Xi^N$ in $\Gamma/\Delta$ is therefore a subset of a finite group and closed under multiplication.  It is therefore a subgroup, and the lemma follows.
\end{proof}

\begin{lem}
\label{Smaller}
Let $\Gamma$ be a finitely generated nilpotent group and $\Delta$ a normal subgroup of $\Gamma$.  Then every finite index subgroup $\Delta_0$ of $\Delta$ contains 
a finite index subgroup $\Delta_0^\circ$ which is normal in $\Gamma$.
\end{lem}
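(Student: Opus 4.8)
The plan is to produce $\Delta_0^\circ$ as a characteristic subgroup of $\Delta$ of finite index, since any characteristic subgroup of a normal subgroup is normal in the ambient group. First I would reduce to constructing, inside an arbitrary finite-index subgroup $\Delta_0 \le \Delta$, a finite-index subgroup that is \emph{characteristic} in $\Delta$. The standard device is the \emph{core-type} intersection: let $m = [\Delta : \Delta_0]$ and set $\Delta_0^\circ := \bigcap_{\phi \in \operatorname{Aut}(\Delta)} \phi(\Delta_0)$, the intersection of all automorphic images of $\Delta_0$ in $\Delta$. By construction this is characteristic in $\Delta$, hence normal in $\Gamma$, and it is contained in $\Delta_0$. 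So the entire content is to show this intersection still has finite index in $\Delta$.

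The key point is that $\Delta$, being a finite-index subgroup of the finitely generated nilpotent group $\Gamma$, is itself finitely generated nilpotent. For such a group there are only finitely many subgroups of any given index $m$: indeed, a subgroup of index $m$ contains the (characteristic, hence in particular verbal) subgroup generated by $m$-th powers and commutators of appropriate weight — more simply, a subgroup of index $m$ in a finitely generated group $\Delta$ contains $\Delta^{(m)}$, the intersection of all index-$\le m$ subgroups, and $\Delta/\Delta^{(m)}$ is a finite group because $\Delta$ is finitely generated (a theorem of M.\ Hall: a finitely generated group has only finitely many subgroups of each finite index). Therefore the set $\{\phi(\Delta_0) : \phi \in \operatorname{Aut}(\Delta)\}$ is a finite collection of subgroups of $\Delta$, each of index $m$. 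A finite intersection of finite-index subgroups has finite index, so $\Delta_0^\circ$ has finite index in $\Delta$, and a fortiori in $\Gamma$.

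I would then assemble the argument in this order: (1) note $\Delta$ is finitely generated nilpotent; (2) invoke M.\ Hall's finiteness theorem to conclude $\Delta$ has finitely many subgroups of index $m := [\Delta : \Delta_0]$; (3) form $\Delta_0^\circ := \bigcap_{\phi \in \operatorname{Aut}(\Delta)} \phi(\Delta_0)$, observing the intersection is really over a finite set and each term has index $m$, so $[\Delta : \Delta_0^\circ] < \infty$; (4) observe $\Delta_0^\circ$ is characteristic in $\Delta$; (5) since $\Delta \trianglelefteq \Gamma$, conjugation by any $\gamma \in \Gamma$ restricts to an automorphism of $\Delta$, which fixes $\Delta_0^\circ$ setwise, so $\Delta_0^\circ \trianglelefteq \Gamma$. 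Finally $\Delta_0^\circ \subseteq \Delta_0$ by construction (take $\phi = \mathrm{id}$), and $[\Gamma : \Delta_0^\circ] = [\Gamma : \Delta][\Delta : \Delta_0^\circ] < \infty$.

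The only real obstacle is justifying that $\Delta$ has only finitely many subgroups of index $m$; everything else is formal. This is exactly where finite generation is used, and it is a classical fact (M.\ Hall, \emph{A topology for free groups and related groups}), so I would simply cite it rather than reprove it. I should also double-check the trivial point that a finite intersection of finite-index subgroups of a group again has finite index, which follows from the orbit-stabilizer/pigeonhole bound $[\Delta : A \cap B] \le [\Delta : A][\Delta : B]$.
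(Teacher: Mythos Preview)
Your approach is correct in spirit and genuinely different from the paper's, but there are two slips worth flagging. First, you write that $\Delta$ is ``a finite-index subgroup of the finitely generated nilpotent group $\Gamma$''; the hypothesis gives no such thing---$\Delta$ is merely a normal subgroup. Fortunately the conclusion you need (that $\Delta$ is finitely generated) still holds, because finitely generated nilpotent groups are polycyclic and hence Noetherian, so \emph{every} subgroup is finitely generated. Second, your final line $[\Gamma:\Delta_0^\circ]=[\Gamma:\Delta][\Delta:\Delta_0^\circ]<\infty$ is both unnecessary and in general false: the lemma only asks for $\Delta_0^\circ$ to have finite index in $\Delta$ (matching the hypothesis on $\Delta_0$), and indeed $[\Gamma:\Delta]$ may well be infinite. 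Once these are fixed, your argument---take the intersection of all $\operatorname{Aut}(\Delta)$-images of $\Delta_0$, use M.~Hall's theorem to see this is a finite intersection of finite-index subgroups, hence a characteristic finite-index subgroup of $\Delta$, hence normal in $\Gamma$---is clean and complete.

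The paper takes a rather different route: it proves the stronger statement that there is a function $f\colon\N\to\N$, depending only on $\Gamma$, such that any $\Delta_0$ of index $n$ in any normal $\Delta\trianglelefteq\Gamma$ contains a $\Gamma$-normal subgroup of index at most $f(n)$ in $\Delta$. This is done by first passing to the normal core of $\Delta_0$ in $\Delta$, then inducting on the number of prime factors of $n$; the base case $n=p$ uses that index-$p$ normal subgroups of $\Delta$ all pull back from the fixed abelian quotient $\Gamma/\Gamma_1$, giving a bound independent of $\Delta$. Your argument is shorter and more conceptual for the lemma as stated, while the paper's argument yields uniformity in $\Delta$ (which, as far as I can tell, is not actually exploited downstream).
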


\begin{proof}
We prove there exists a function $f\colon \N\to \N$ depending only on $\Gamma$ such that for any normal subgroup $\Delta$ of $\Gamma$, every subgroup $\Delta_0$ of $\Delta$ of
index $n$ contains a normal subgroup of $\Gamma$ of index $\le f(n)$ in $\Delta$.
Replacing $\Delta_0$ with the kernel of the left action of $\Delta$ on $\Delta/\Delta_0$, we may assume without loss of generality that $\Delta_0$ is normal in $\Delta$.
We prove the claim by induction on the total number of prime factors of $n$.  

If $n=p$ is prime, it suffices to prove that there is an upper bound, independent of $\Delta$, on the number of
normal subgroups of $\Delta$ of index $p$.  This is true because intersecting a fixed central series of $\Gamma = \Gamma_0\triangleright \Gamma_1\triangleright\cdots$
with $\Delta$ gives a central series of $\Delta$, and every index $p$ normal subgroup of $\Delta$ is the inverse image in $\Delta$ of an index $p$ subgroup of the finitely generated
abelian group $\Delta/\Delta\cap \Gamma_1\subseteq \Gamma/\Gamma_1$.

If $n$ has $\ge 2$ prime factors, then for some prime factor $p$ of $n$, $\Delta_0$ is a normal subgroup of index $n/p$ of a normal subgroup $\Delta_p$ of index $p$ in $\Delta$.
By the induction hypothesis, $\Delta_p$ contains a normal subgroup $\Delta_p^\circ$ of $\Gamma$, of index $\le f(p)$ in $\Delta$.  The index of $\Delta_0\cap \Delta_p^\circ$ in $\Delta_p^\circ$
divides $n/p$, and applying the induction hypothesis, we deduce that the existence of a normal subgroup $\Delta_0^\circ$ of $\Gamma$ of index $\le \max_{i\le n/p} f(i)$ in $\Delta_p^\circ$.
\end{proof}

\begin{prop}
\label{fi-ext}
Let $\Gamma$ be a finitely generated nilpotent group, $\Delta$ a normal subgroup of $\Gamma$, $\Xi$ a subset of $\Gamma$ and $M_1,M_2$ positive integers such that
$\Xi^{M_1}$ contains a finite index subgroup of $\Delta$ and the image of $\Xi^{M_2}$ in $\Gamma/\Delta$ contains a finite index subgroup of $\Gamma/\Delta$.
Then there exists $M_3$ such that $\Xi^{M_3}$ is a finite index subgroup of $\Gamma$.
\end{prop}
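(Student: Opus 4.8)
The plan is to bootstrap from the two hypotheses to a single finite-index subgroup sitting inside some fixed power of $\Xi$. First, let $\Delta_0 \subseteq \Xi^{M_1} \cap \Delta$ be a finite-index subgroup of $\Delta$, and let $\bar\Delta_1 \subseteq \Gamma/\Delta$ be a finite-index subgroup contained in the image of $\Xi^{M_2}$. By Lemma~\ref{Smaller}, $\Delta_0$ contains a subgroup $\Delta_0^\circ$ that is of finite index in $\Delta$ and \emph{normal in $\Gamma$}; replacing $\Delta_0$ by $\Delta_0^\circ$ we may assume $\Delta_0 \trianglelefteq \Gamma$. Let $\Delta_1$ denote the preimage in $\Gamma$ of $\bar\Delta_1$; it is a finite-index normal-up-to-conjugacy subgroup of $\Gamma$ containing $\Delta$, and every coset of $\Delta$ in $\Delta_1$ meets $\Xi^{M_2}$.

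The core of the argument is then to show that $\Xi^{M_3}$ contains $\Delta_0$ for suitable $M_3$, from which Lemma~2.? (the first lemma of \S3, with $\Gamma$ replaced by $\Delta_1$, $\Delta := \Delta_0$, and $M := M_3$) produces a finite-index subgroup of $\Delta_1$, hence of $\Gamma$, inside some power of $\Xi$. To get $\Delta_0 \subseteq \Xi^{M_3}$: pick a finite set of coset representatives $\gamma_1,\dots,\gamma_t$ for $\Delta$ in $\Delta_1$, each chosen inside $\Xi^{M_2}$. For each $i$, the conjugate $\gamma_i^{-1}\Delta_0\gamma_i$ is again a finite-index subgroup of $\Delta$ (since $\Delta_0 \trianglelefteq \Gamma \supseteq \Delta_1$ it actually equals $\Delta_0$, which simplifies things — but even if one only uses $\Delta_0 \trianglelefteq \Delta_1$ this is automatic). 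The key point is that one wants $\Xi^{M_1}$ to contain not just $\Delta_0$ but enough translates to reach everything; here I would instead argue that $\Xi^{\text{bdd}}$ contains a generating set for $\Delta_0$ as follows. Since $\Delta_0 \subseteq \Xi^{M_1}$ and $\Delta_0$ is finitely generated, fix generators $\delta_1,\dots,\delta_s \in \Xi^{M_1}$ of $\Delta_0$. Then any element of $\Delta_0$ is a word of some length $L$ in the $\delta_j^{\pm 1}$; but $L$ is not bounded, so a direct product expansion does not terminate. To handle this we exploit nilpotence: in a finitely generated nilpotent group, a finite-index normal subgroup $\Delta_0$ has the property that $\Delta_0 = \{\delta_1^{k_1}\cdots\delta_s^{k_s} : k_j \in \Z\}$ for a suitable \emph{Mal'cev-type} generating tuple, so every element of $\Delta_0$ is an ordered product of bounded "syllable" length in fixed elements $\delta_j$ — but with unbounded exponents $k_j$. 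To convert unbounded powers into bounded words, apply the first lemma of this section once more at the level of each cyclic piece: $\delta_j^{\Z} \cap \Xi^{M_1}$ already contains $\delta_j$, and the lemma (again) upgrades $\langle\delta_j\rangle$-closure under bounded powers to a finite-index sub-semigroup that is a group, giving $\delta_j^{\Z} \subseteq \Xi^{\text{bdd}}$ up to finite index. Composing these bounded representations over the finitely many $j$ and then over the finitely many cosets $\gamma_i$ yields a single $M_3$ with $\Xi^{M_3} \supseteq$ a finite-index subgroup of $\Gamma$.

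The main obstacle is exactly this passage from "contains the subgroup $\Delta_0$ as a set" to "contains $\Delta_0$ inside a \emph{bounded} power of $\Xi$": a priori $\Xi$ need not contain inverses, and words in $\Delta_0$ can be arbitrarily long, so one cannot naively multiply. The resolution is to quotient out by $\Delta_0$ repeatedly: work in the finite group $\Gamma/\Delta_0$ (and its iterated central sections), where "bounded power of $\Xi$ that is closed under multiplication" forces a subgroup by the pigeonhole argument of the first lemma. Concretely, I expect the cleanest write-up to be: (1) reduce to $\Delta_0 \trianglelefteq \Gamma$ via Lemma~\ref{Smaller}; (2) note $\Xi^{M_1 + M_2 k}$ for large $k$ stabilizes modulo $\Delta_0$ to a subgroup of $\Gamma/\Delta_0$ containing the images of $\Delta_1$; (3) lift: if $\bar\xi \in \Xi^{M}/\Delta_0$ lies in this subgroup and $\delta \in \Delta_0 \subseteq \Xi^{M_1}$, then $\bar\xi\cdot\bar\delta$-type manipulations show every coset of $\Delta_0$ inside the would-be subgroup is hit by a \emph{fixed} power $\Xi^{M_3}$, and since that power is also closed under multiplication mod $\Delta_0$ and contains $\Delta_0$ itself, $\Xi^{M_3}$ is a genuine subgroup of finite index in $\Gamma$. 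Step (2)–(3) is where the finite-generation and nilpotence of $\Gamma$ are used, precisely so that all the stabilization and coset-counting happens in finite groups.
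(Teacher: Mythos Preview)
Your reduction is on the right track and matches the paper's opening moves: replace $\Delta_0$ by a $\Gamma$-normal finite-index subgroup via Lemma~\ref{Smaller}, then pass to $\Gamma/\Delta_0$ so that the problem becomes one where the normal subgroup $\bar\Delta=\Delta/\Delta_0$ is \emph{finite}. Your lifting step (3) is also fine: once one knows that some $\bar\Xi^N$ is a finite-index subgroup of $\Gamma/\Delta_0$, the containment $\Delta_0\subseteq\Xi^{M_1}$ lets you pull this back and then invoke the first lemma of the section.

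The gap is in step~(2), and it is the heart of the matter. First, a slip: $\Gamma/\Delta_0$ is \emph{not} a finite group ($\Delta_0$ has finite index in $\Delta$, not in $\Gamma$), so your sentence ``work in the finite group $\Gamma/\Delta_0$'' and the appeal to pigeonhole there do not go through as stated. After the quotient you are in an infinite group $\bar\Gamma$ with a finite normal subgroup $\bar\Delta$, knowing only that $e\in\bar\Xi^{M_1}$ and that $\bar\Xi^{M_2}$ surjects onto a finite-index subgroup of $\bar\Gamma/\bar\Delta$. From this alone there is no reason the chain $\bar\Xi^{M}\subseteq\bar\Xi^{2M}\subseteq\cdots$ should stabilize, and even if it did, a sub-semigroup of an infinite group need not be a subgroup. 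Your Mal'cev/bounded-syllable detour does not help either, as you yourself note: exponents are unbounded, and the ``apply the first lemma to each cyclic piece'' idea requires $\langle\delta_j\rangle$ to have finite index somewhere, which it does not.

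What the paper actually does for the $\bar\Delta$-finite endgame is a fiber-counting trick that uses neither nilpotence nor Mal'cev coordinates. One first restricts $\bar\Xi$ to the preimage $\Gamma_0\bar\Delta$ of the finite-index subgroup in $\bar\Gamma/\bar\Delta$, so that $\bar\Xi$ meets every fiber of $\pi\colon\Gamma_0\bar\Delta\to\Gamma_0\bar\Delta/\bar\Delta$; in particular it meets $\bar\Delta$, so after raising to the $|\bar\Delta|$-th power one has $e\in\bar\Xi$ and hence an increasing chain $\bar\Xi\subseteq\bar\Xi^2\subseteq\cdots$. Now let $m_i$ be the \emph{maximum} over all fibers of $|\bar\Xi^i\cap\pi^{-1}(\cdot)|$; multiplying by an element realizing this maximum shows that \emph{every} fiber of $\bar\Xi^{i+1}$ has size at least $m_i$. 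Since fiber sizes are bounded by $|\bar\Delta|$, the chain stabilizes with all fibers of equal size; the stable $\bar\Xi$ is then multiplicatively closed, and equal fiber sizes force it to be closed under inverses as well. This is the missing idea; nilpotence and finite generation of $\Gamma$ are used only to run Lemma~\ref{Smaller} and to pass to $\langle\Xi\rangle$, not in this combinatorial endgame.
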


\begin{proof}
Let $\langle \Xi \rangle$ denote the subgroup of $\Gamma$ generated by $\Xi$.  The intersection $\langle \Xi \rangle\cap \Delta$ is of finite index in $\Delta$, and the image $\langle \Xi \rangle \Delta/\Delta$
is of finite index in $\Gamma/\Delta$, so $\langle \Xi \rangle$ is of finite index in $\Gamma$.  As a subgroup of a finitely generated nilpotent group, it is also finitely generated and nilpotent.  Replacing $\Gamma$ and $\Delta$ by $\langle \Xi \rangle$ and
$\langle \Xi \rangle\cap \Delta$ respectively, we  assume without loss of generality that $\Xi$ generates $\Gamma$.

Replacing $\Xi$ with $\Xi^{M_1}$, we may assume $\Xi$ contains a finite index subgroup
$\Delta_0$  of $\Delta$.  By Lemma~\ref{Smaller}, we may assume that $\Delta_0$ is a normal subgroup of $\Gamma$.
Let $\bar\Xi$ denote the image of $\Xi$ in $\Gamma/\Delta_0$.  If $\bar\Xi^{N}$ is a finite index subgroup of $\bar \Gamma := \Gamma/\Delta_0$, then $\Xi^{N+1}$ is the inverse image of this subgroup in
$\Gamma$, and the proposition holds.  Replacing $\Gamma$, $\Delta$, $\Xi$ by $\bar\Gamma$, $\bar\Delta := \Delta/\Delta_0$, and $\bar\Xi$ respectively, we reduce to the case that $\Delta$ is finite.
We need only show that if $\Xi^{M_2}\Delta$ contains a finite index subgroup $\Gamma_0$ of $\Gamma$, then $\Xi^N$ is a finite index subgroup of $\Gamma$ for some $N$.

Replacing $\Xi$ by $\Xi^{M_2}\cap \Gamma_0 \Delta$, we may assume that $\Xi\subseteq \Gamma_0\Delta$ and $\Xi$ meets every fiber of $\pi\colon \Gamma_0\Delta\to \Gamma_0\Delta/\Delta$.  In particular, $\Xi$ contains an element of $\Delta$, so replacing $\Xi$ with $\Xi^{|\Delta|}$, we may assume $\Xi$ contains the identity, so 
\begin{equation}
\label{chain}
\Xi\subseteq \Xi^2\subseteq\Xi^3\subseteq\cdots
\end{equation}
For $i$ a positive integer, let $m_i$ denote the maximum over all fibers of $\pi$ of the cardinality of the intersection of the fiber with $\Xi^i$.
Thus, the intersection of every fiber of $\pi$ with $\Xi^{i+1}$ is at least $m_i$.  Since fiber size is bounded above by $\Delta$, the sequence (\ref{chain}) must eventually stabilize.
Replacing $\Xi$ with a suitable power, we have $\Xi^2 = \Xi$.  Thus $\Xi$ is closed under multiplication.  As $\Xi$ meets every fiber of $\pi$ in the same number of points,
$\gamma\in \Xi$ implies $\gamma (\Xi\cap \pi^{-1}(\pi(\gamma)^{-1})) = \Xi\cap \Delta_0$, which implies $\gamma^{-1}\in \Xi$.  Thus, $\Xi$ is a subgroup of $\Gamma$ of bounded index.

%Let $\Theta\subset \Gamma\times \Z$ denote the group generated by $\{(\xi,1)\mid \xi\in \Xi\}$.  Thus $\Theta$ projects onto $\Gamma$ and $\Z$.  By Goursat's lemma, either $\Theta$ is the graph of a homomorphism
%$\phi\colon \Gamma\to \Z$ sending each $\xi\in \Xi$ to $1$ or there exists a positive integer $k$ and a homomorphism $\phi\colon \Theta\to \Z/k\Z$ mapping $\xi\mapsto 1$ and such that $\Theta$ consists of all pairs $(\gamma, a)\in \Gamma\times \Z$ such that $\phi(\gamma) = \bar a$.  In the former case, $\phi(\Delta) = 0$.  

\end{proof}

Next, we prove a criterion for a subgroup of $\cG(\cO)$ to be of finite index.

\begin{prop}
Let $\Gamma\subseteq \cG(\cO)\subset \cG(K)$.  Then the Hirsch number $h \Gamma$ of $\Gamma$ satisfies
\begin{equation}
\label{Hirsch}
h\Gamma\le [K:\Q]\dim \cG_K.
\end{equation}
If equality holds in (\ref{Hirsch}), then $\Gamma$ is of finite index in $\cG(\cO)$.

\end{prop}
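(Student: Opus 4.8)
The plan is to reduce the statement to two standard facts about finitely generated torsion‑free nilpotent groups. The first is that the Hirsch number $h\Gamma$ is well defined and equals the sum of the ranks of the successive quotients of \emph{any} subnormal series of $\Gamma$ with finitely generated abelian quotients. The second is that if $\Gamma$ is a subgroup of a finitely generated torsion‑free nilpotent group $\Delta$, then $h\Gamma\le h\Delta$, with equality if and only if $\Gamma$ has finite index in $\Delta$. As recalled at the start of this section, $\cG(\cO)$ is finitely generated torsion‑free nilpotent, hence so is every subgroup $\Gamma\subseteq\cG(\cO)$, so these facts apply with $\Delta=\cG(\cO)$. Granting them, it suffices to prove $h\cG(\cO)\le[K:\Q]\dim\cG_K$: the inequality (\ref{Hirsch}) then holds for every $\Gamma$ by monotonicity, and if equality holds in (\ref{Hirsch}) then $[K:\Q]\dim\cG_K=h\Gamma\le h\cG(\cO)\le[K:\Q]\dim\cG_K$ forces $h\Gamma=h\cG(\cO)$, so $\Gamma$ has finite index in $\cG(\cO)$.

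It then suffices to bound $h\Gamma$ for an arbitrary finitely generated subgroup $\Gamma$ of $\cG(K)$, integrality playing no role in the inequality itself. First I would take the lower central series $\cG_K=\cG_K^{(1)}\supseteq\cG_K^{(2)}\supseteq\cdots\supseteq\cG_K^{(c+1)}=\{e\}$ of the unipotent $K$‑group $\cG_K$. Each $\cG_K^{(i)}$ is a connected closed $K$‑subgroup, each quotient $\cG_K^{(i)}/\cG_K^{(i+1)}$ is commutative, unipotent and connected, hence a vector group $\bG_a^{d_i}$ with $\sum_i d_i=\dim\cG_K$; and since $H^1(K,\cG_K^{(i+1)})$ vanishes for unipotent groups in characteristic zero (the fact already used in \S1), the exact sequence of $K$‑points gives $\cG_K^{(i)}(K)/\cG_K^{(i+1)}(K)\cong K^{d_i}$. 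Setting $\Gamma_i:=\Gamma\cap\cG_K^{(i)}(K)$, the subgroups $\Gamma_i$ form a subnormal series of $\Gamma=\Gamma_1$ ending in $\{e\}=\Gamma_{c+1}$, and $\Gamma_i/\Gamma_{i+1}$ embeds in $\cG_K^{(i)}(K)/\cG_K^{(i+1)}(K)\cong K^{d_i}$.

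Since $\Gamma_i/\Gamma_{i+1}$ is a finitely generated subgroup of the $\Q$‑vector space $K^{d_i}$, it is free abelian of rank at most $\dim_\Q K^{d_i}=[K:\Q]d_i$ (tensor the inclusion with $\Q$, which is flat over $\Z$). Summing over $i$ and using the first structural fact,
\[
h\Gamma=\sum_i\operatorname{rank}\bigl(\Gamma_i/\Gamma_{i+1}\bigr)\le\sum_i[K:\Q]\,d_i=[K:\Q]\dim\cG_K .
\]
Applying this with $\Gamma=\cG(\cO)$ and combining with the reductions of the first paragraph completes the argument. I do not expect a genuine obstacle here; the points that need the most care are the two invoked facts about Hirsch numbers of finitely generated nilpotent groups (standard, via Schreier refinement and the Mal'cev completion) and the identification of the graded pieces of the lower central series with vector groups over $K$ together with the resulting exactness on $K$‑points, which follows from the vanishing of non‑abelian $H^1$ exactly as in \S1.
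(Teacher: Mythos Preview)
Your argument is correct and follows essentially the same route as the paper: both filter $\Gamma$ by its intersections with a central series of $\cG_K$, identify the successive quotients as finitely generated subgroups of $K^{d_i}$, and bound their ranks by $[K:\Q]d_i$ to obtain the inequality. The one difference is in handling the equality case: the paper argues directly that if equality holds then each graded piece $\Gamma_i/\Gamma_{i+1}$ is commensurable with $\cG(\cO)_i/\cG(\cO)_{i+1}$ (rank equal to $[K:\Q]d_i$ inside $K^{d_i}$ forces commensurability with $\cO^{d_i}$), and then reads off the index as a finite product, whereas you package this step into the general fact that $h\Gamma=h\Delta$ forces finite index for finitely generated torsion-free nilpotent groups. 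Your version is cleaner to state but hides exactly the same computation; the paper's version is self-contained and makes the commensurability explicit.
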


\begin{proof}
Hirsch number is additive in short exact sequences.  Let $G_0 := \cG_K$, and let $G_0\supsetneq G_1 \supsetneq \cdots\supsetneq G_k = \{1\}$ be a central series.
Then we have a decreasing filtration of $\Gamma$ by $\Gamma_i := \Gamma\cap G_i(K)$, and each quotient $\Gamma_i/\Gamma_{i+1}$ is a free abelian subgroup
of $G_i(K)/G_{i+1}(K) \cong K^{\dim G_i/G_{i+1}}$.  Every free abelian subgroup of $K^r$ has rank $\le r[K:\Q]$ with equality if and only if it is commensurable with $\cO^r$.
This implies (\ref{Hirsch}).

Applying the same argument to $\cG(\cO)$, we get 
$$h\Gamma\le h\cG(\cO)\le [K:\Q]\dim \cG_K.$$
If equality holds in (\ref{Hirsch}), then $\cG(\cO)_i/\cG(\cO)_{i+1}$ and its subgroup
$\Gamma_i/\Gamma_{i+1}$ are commensurable, and this implies that $\Gamma$ is of index
$$\prod_{i=0}^{k-1} [\cG(\cO)_i/\cG(\cO)_{i+1}:\Gamma_i/\Gamma_{i+1}]<\infty$$
in $\Gamma$.
\end{proof}

We prove Theorem~\ref{ti-ring} by showing that $h(f_1(\cO)\cup\cdots\cup f_n(\cO))^M$ contains a subset which is a group of Hirsch number 
$[K:\Q]\dim \cG_K$.
We first treat the commutative case.

\begin{prop}
\label{ti-ring-comm}
Theorem~\ref{ti-ring} holds if $\cG$ is commutative.
\end{prop}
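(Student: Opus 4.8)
The plan is to reduce the commutative integral case to the combination of three ingredients: the field-level statement (Proposition~\ref{ti-field-vg}), Siegel's solution of Waring's problem over $\cO$, and the Hirsch-number criterion just established. Since $\cG$ is commutative and unipotent in characteristic zero, $\cG_K$ is a vector group of some dimension $d$, and we may choose coordinates so that $\cG(\cO)$ is commensurable with $\cO^d$; after replacing $\cG(\cO)$ by a finite-index subgroup (which is harmless for the conclusion, by the Hirsch-number criterion) we may assume $\cG(\cO)=\cO^d$ with the morphisms written as $f_j(x)=(P_{1j}(x),\ldots,P_{dj}(x))$ with $P_{ij}\in\cO[x]$, as in (\ref{P-vector}) and (\ref{P-formula}). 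The goal becomes: show that for some $M$, the set $(f_1(\cO)\cup\cdots\cup f_n(\cO))^M=\sum^M \big(f_1(\cO)\cup\cdots\cup f_n(\cO)\big)$ (written additively) is a finite-index subgroup of $\cO^d$, equivalently that it contains a finite-index subgroup.

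The core step is an integral analogue of the argument in the proof of Proposition~\ref{ti-field-vg}. First I would establish the integral analogue of Proposition~\ref{twisted}: there is an integer $N$ so that every vector in $\cO^d$ is a sum of $N$ elements of $X^d_1(\cO):=\{(x,x^2,\ldots,x^d)\mid x\in\cO\}$. The reduction-to-lower-$d$ argument in the proof of Proposition~\ref{twisted} is not available over a ring with no nontrivial idempotents issues, so instead I would argue directly: by Siegel's theorem each coordinate of a target can be hit by a bounded sum of $k$-th powers for each $k\le d$, and—exactly as in the passage to (\ref{linear-system})—one uses Siegel together with sign/translation tricks to decouple the power sums $\sum_\ell x_\ell^k$ for $1\le k\le d$, so that these can be prescribed independently in $\cO$. (This is where total imaginarity enters: Siegel's theorem requires that $-1$ be a sum of squares in $\cO$, or more precisely his hypotheses on the number ring, so that there is no obstruction of positivity.) Granting this, run the linear-algebra step of the proof of Proposition~\ref{ti-field-vg} over $\cO$: with $y_{jk}=\sum_{\ell=1}^N x_{j\ell}^k$ freely chosen in $\cO$, the system (\ref{linear-system}) becomes an $\cO$-linear system $\sum_{j,k} a_{ijk}y_{jk}=c_i-N\sum_j a_{ij0}$. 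Over a field this is solvable unless the linear forms are dependent; over $\cO$ it is solvable up to finite index exactly when the forms have full rank over $K$, i.e. when the $\cO$-span of the columns has finite index in $\cO^d$. As in the field proof, a nontrivial $K$-relation $\sum_i b_i a_{ijk}=0$ for all $j,k\ge1$ would force $\pi\circ f_j$ constant for $\pi(t)=\sum b_i t_i$, contradicting that $\{f_1,\ldots,f_n\}$ is generating. Hence the image of the $y$-map has finite index, and adjusting the constant term (again using the integral twisted-power lemma to realize the needed shift by a bounded sum) shows $(f_1(\cO)\cup\cdots\cup f_n(\cO))^M$ contains a finite-index subgroup of $\cO^d$ for a suitable $M$.

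Finally, I would conclude via the Hirsch-number criterion: the set $\Xi^M$ with $\Xi=f_1(\cO)\cup\cdots\cup f_n(\cO)$ generates a subgroup containing a finite-index subgroup of $\cG(\cO)\simeq\cO^d$, hence of Hirsch number $d[K:\Q]=[K:\Q]\dim\cG_K$; by the proposition on Hirsch numbers, any subgroup of $\cG(\cO)$ meeting this bound is of finite index, and then the first lemma of §3 (turning ``contains a finite-index subgroup'' into ``some power is a finite-index subgroup'') gives that some $\Xi^{M'}$ is itself a finite-index subgroup, which is exactly the assertion of Theorem~\ref{ti-ring} in the commutative case. The main obstacle I anticipate is the integral twisted-power lemma: over $\cO$ one cannot invoke the clean idempotent argument of Proposition~\ref{twisted}, so the decoupling of the power sums $\sum_\ell x_\ell^k$ must be done by hand using Siegel's theorem plus the nonreality hypothesis, keeping careful track that the resulting correction to the linear system only perturbs the target within a coset of a finite-index subgroup.
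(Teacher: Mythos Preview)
Your overall strategy coincides with the paper's: prove an integral analogue of Proposition~\ref{twisted}, feed it into the linear system~(\ref{linear-system}), and then stabilize to a finite-index subgroup. But the integral twisted lemma you formulate --- that \emph{every} vector in $\cO^d$ is a bounded sum of elements of $X^d_1(\cO)$, equivalently that the power sums $\sum_\ell x_{j\ell}^k$ can be prescribed independently in $\cO$ --- is false as stated. Newton's identity $p_1^2-p_2=2e_2$ forces $s_1^2-s_2\in 2\cO$ for any $(s_1,s_2)=(\sum x_i,\sum x_i^2)$ with $x_i\in\cO$; in particular $(0,1)\in\cO^2$ is never hit, regardless of $N$. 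You seem partly aware of this (``solvable up to finite index'', ``perturbs the target within a coset''), but the correct lemma is genuinely weaker: there exist $L,M>0$ with $L\cO^d\subseteq X^d_M(\cO)$. The paper obtains this by first invoking Proposition~\ref{twisted} over $K$ (this is where nonreality, hence total imaginarity, is actually used --- not in Siegel's theorem, which holds for all number rings) to write each $e_i$ as a $K$-sum and clearing denominators to get $k_ie_i\in X^d_M(\cO)$; then using the finite-difference identity
\[
i!\alpha=\sum_{m=0}^{i-1}(-1)^m\binom{i-1}{m}(\alpha+m)^i-\tfrac12(i-1)i!
\]
to show $i!\cO\subseteq\cO^{(i)}$; and only then applying Siegel's Theorem~VI to write every element of $\cO^{(i)}$ as a bounded sum of $i$th powers. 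Your ``decouple by hand using Siegel plus sign/translation tricks'' is too vague to cover this; the finite-difference step is the missing concrete idea.

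With the corrected lemma in hand, your remaining outline matches the paper: system~(\ref{linear-system}) is solvable with $y_{jk}\in L\cO$ whenever the right-hand side is sufficiently divisible, yielding a sublattice $\Lambda=D\cO^m$ contained in a bounded sum of the $f_j(\cO)$, and a straightforward stabilization of the chain $X_i:=\sum^{iN}(f_1(\cO)\cup\cdots\cup f_n(\cO))$ (using $\Lambda\subseteq X_i\subseteq\cG(\cO)$ and $X_{i+1}=X_1+X_i$) produces a finite-index subgroup, as you indicate.
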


\begin{proof}
First we claim that for all $d>0$ there exist integers $L,M>0$ such that
$$L \cO^d \subseteq\{(x_1+\cdots+x_M,x_1^2+\cdots+x_M^2,\ldots,x_1^d+\cdots+x_M^d)\mid x_i\in \cO\}.$$
Since this is of finite index in $\cO^d$, replacing $M$ by a larger integer (also denoted $M$), we can guarantee that every element
in the group generated by $\{(x,x^2,\ldots,x^d)\mid x\in \cO\}$ can be written as a sum of $M$ elements.

To prove the claim, we use Proposition~\ref{twisted} to show that each basis vector $e_i$ is a sum of $M$ elements of 
$\{(x,x^2,\ldots,x^d)\mid x\in K\}$.  Replacing each $x$ in the representation of $e_i$ by $Dx$ for some sufficiently divisible positive integer $D$,
it follows that each $k_i e_i$ can be written as a sum of $M$ elements of $\{(x,x^2,\ldots,x^d)\mid x\in \cO\}$ for suitable positive integers $k_i$.

For each $\alpha \in \cO$, we see from the $(i - 1)$th difference of $\alpha^i$ (see \cite[Theorem 1, p.267]{Wright}) that
\begin{align}
\label{alternate}
i!\alpha = \sum_{m = 0}^{i - 1}(-1)^{m} \binom{i - 1}{m} \left(\alpha + m\right)^i - \dfrac{1}{2}(i - 1)i!.
\end{align}
Thus every element of $i!\cO$ is in the subring $\cO^{(i)}$ of $\cO$ generated by $i$th powers of elements of $\cO$. A theorem of Siegel (see \cite[Theorem VI]{Siegel1}) implies that there exist $N_1, N_2, \ldots$ such that every element of $\cO^{(i)}$ is a sum of $N_i$ $i$th powers of elements of $\cO$. Thus every element of $i!\cO$ is a sum of $N_i$ $i$th powers of elements of
$\cO$, and therefore every element of $i!k_i \cO e_i$ is a sum of $MN_i$ elements of $\{(x,x^2,\ldots,x^d)\mid x\in \cO\}$.

Letting $L$ denote a positive integer
divisible by $1!k_1, 2!k_2\ldots, d!k_d$, and replacing $M$ by $M(N_1+\cdots+N_d)$, we can write every element of $L \cO^d$ as a sum of $M$ elements of 
$\{(x,x^2,\ldots,x^d)\mid x\in \cO\}$.

Restricting $f_j$ to the generic fiber, we can write it as a vector of polynomials (\ref{P-vector}) with the $P_{ij}$ given by (\ref{P-formula}).
We can solve the system (\ref{system}) of equations in $\cO$ whenever we can solve (\ref{linear-system}) in $y_{jk}\in L\cO$.
This system is always solvable in $K$, so it is solvable in $L\cO$ whenever the  $c_i - N\sum_j a_{ij0}$ is sufficiently divisible.  Thus, there exists an integer $D$ such that if $N$ and the $c_i$ are divisible by $D$ and $N$ is sufficiently large, then $(c_1,\ldots,c_m)$ is a sum of $N$ terms each of which belongs to $(f_1(\cO) \cup \cdots \cup f_n(\cO))$.  Let $\Lambda := D\cO^m$.

Now, $\Lambda\subseteq \cG(\cO)\subset \cG(K)\cong K^m$.  As $\cG(\cO)$ has a finite filtration whose quotients are finitely generated free abelian groups, it must contain $\Lambda$ as a subgroup of finite index.  Defining
$$X_i := \underbrace{\bigl(f_1(\cO)\cup\cdots\cup f_n(\cO)\bigr)+\cdots+\bigl(f_1(\cO)\cup\cdots\cup f_n(\cO)\bigr)}_{iN},$$
we have $\Lambda\subseteq X_i\subseteq \cG(\cO)$ for all $i\ge 1$, and $X_{i+1} = X_1 + X_i$.
It follows that $X_{i+1}$ contains every $\Lambda$-coset in $\cG(\cO)$ represented by any element of $X_i$, and therefore the sequence $X_1,X_2,\ldots$ stabilizes to a subgroup of $\cG(\cO)$ of rank $m[K:\Q]$ and of finite index in $\cG(\cO)$.
\end{proof}

Now we prove Theorem~\ref{ti-ring}
\begin{proof}
We first observe that Proposition~\ref{ti-field-ind} remains true over $\cO$; more precisely,  assuming that the morphisms $f_i$ are defined over $\cO$,  the elements $g_i$ can be taken to be in $\cG(\cO)$ and $a_{i,j}, b_{i,j}\in \cO$, so the morphisms $h_i$ are defined over $\cO$.
Instead of using Proposition~\ref{ti-field-vg}, we use Proposition~\ref{ti-ring-comm}.
The image $\bar\gamma_1$ of the element $\gamma_1$ guaranteed by Lemma~\ref{bracket} may not lie in the lattice $\Lambda\subset G(K)/G'(K)\cong K^m$, but some positive integer multiple of $\bar\gamma_1$ will do so, and the property of $\gamma_1$ with respect to $V$ is unchanged when it is replaced by a non-trivial power.  The elements $a_i$ guaranteed by Proposition~\ref{twisted} may not lie in $\cO$, but again we can clear denominators by multiplying by a suitable positive integer.
The element $g_{r+1}$ will exist as long as $N v_0\in \Lambda$.  This can be guaranteed by replacing $N$ with a suitable positive integral multiple.

Now we proceed as in the proof of Theorem~\ref{ti-field}, using induction on $\dim G$.  By the induction hypothesis, there exists $N$ such that $(\bigcup_i h_i(\cO))^N$ contains a subgroup of
$G'(K)$ of Hirsch number $[K:\Q]\dim G'$.  On the other hand, by Proposition~\ref{ti-ring-comm}, there exists a bounded power of $(\bar{f}_1(\cO)\cup\cdots\cup \bar{f}_n(\cO))$ which contains  a subgroup of
$(G/G')(K)$ of Hirsch number $[K:\Q]\dim G/G'$. Here for each $1 \le i \le n$, $\bar{f}_i(x)$ denotes the composition of $f_i(x)$ with the quotient map $G\to G/G'$. The theorem follows from Proposition~\ref{fi-ext} and the additivity of Hirsch numbers.

\end{proof}

\section{The easier unipotent Waring problem}

We recall that the classical ``easier Waring problem'' \cite{Wright} is to prove that for every positive integer $n$ there exists $m$ such that every integer can be written in the form $\pm a_1^n \pm \cdots\pm a_m^n$, $a_i\in \Z$,
and to determine the minimum value of $m$ for each $n$.

In this section, we prove unipotent analogues of the easier  Waring problem for arbitrary fields of characteristic zero and rings of integers of arbitrary number fields:

\begin{thm}
\label{easier-Waring-for-fields}
If $G$ is a unipotent algebraic group over a field $K$ of characteristic zero and $\{f_1,\ldots,f_n\}$ is a  generating set of $K$-morphisms $\A^1\to G$, then for some positive integer $M$,
\begin{align*}
\left(\bigcup_{e_1, \ldots, e_n \in \{\pm 1\}} (f_1(K)^{e_1}\cup\cdots\cup f_n(K)^{e_m})\right)^M = G(K).
\end{align*}

\end{thm}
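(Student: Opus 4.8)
The plan is to reproduce the proof of Theorem~\ref{ti-field} almost verbatim, with the nonreality hypothesis removed by exploiting the inverses now available; note that the asserted equality is the same as $\bigl(f_1(K)\cup f_1(K)^{-1}\cup\cdots\cup f_n(K)\cup f_n(K)^{-1}\bigr)^M=G(K)$. In that proof, nonreality was used only through Proposition~\ref{twisted}---equivalently, through Ellison's theorem \cite[Theorem~2]{Ellison} that over a nonreal field every element is a bounded sum of $n$th powers; everything else (Lemma~\ref{bracket}, the deduction of Theorem~\ref{ti-field} from Propositions~\ref{ti-field-vg} and~\ref{ti-field-ind}, the vanishing $H^1(K,G')=0$) needs only $\operatorname{char} K=0$. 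The replacement for Ellison's theorem is elementary: applying the finite-difference identity (\ref{alternate}) to $\alpha/i!$ in place of $\alpha$ writes every $\alpha\in K$ as a $\Z$-linear combination, with bounded coefficients, of $i$th powers of elements of $K$ plus an integer constant, hence as a bounded sum of terms $\pm\beta^i$ with $\beta\in K$---with no hypothesis on $K$ other than characteristic zero.

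First I would prove the analogue of Proposition~\ref{twisted}: for each $d$ there is an $N$ so that every vector in $K^d$ is a sum of at most $N$ elements of $X^d_1\cup(-X^d_1)$, where $X^d_1=\{(x,x^2,\ldots,x^d)\mid x\in K\}$. The proof is that of Proposition~\ref{twisted} word for word, with three harmless changes: the set $X^d$ of such bounded signed sums is now a subring of $K^d$ containing $(1,1,\ldots,1)$ rather than merely a subsemiring; the appeal to Ellison's theorem becomes the identity above; and the final contradiction is cleaner, since any ring endomorphism $\psi$ of $K$ fixes the prime field, so $\psi(2^i)=2^i\neq 2^{d+1}$.

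Next I would prove the vector-group case, the analogue of Proposition~\ref{ti-field-vg}, by copying its proof with the new Proposition~\ref{twisted} in place of the old one. The only new point is bookkeeping: a representative of $(c_1,\ldots,c_m)$ is sought as $\prod_{j,\ell}f_j(x_{j\ell})^{\epsilon_{j\ell}}$ with $\epsilon_{j\ell}\in\{\pm1\}$, producing coordinate equations $\sum_j\bigl(a_{ij0}\sigma_j+\sum_{k=1}^{d}a_{ijk}y_{jk}\bigr)=c_i$, where $y_{jk}=\sum_\ell\epsilon_{j\ell}x_{j\ell}^k$ may be prescribed arbitrarily in $K$ for $1\le k\le d$ by the new twisted-Waring theorem and $\sigma_j=\sum_\ell\epsilon_{j\ell}$ is the signed count (which one may pad to $0$ with terms having $x_{j\ell}=0$, or simply carry on the right-hand side); the resulting linear system in the $y_{jk}$ is solvable unless some nonzero linear functional $\pi$ has every $\pi\circ f_j$ constant, contradicting the generating hypothesis. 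Then I would prove the analogue of Proposition~\ref{ti-field-ind}, producing generating morphisms $h_i\colon\A^1\to G'$ of the stated form but with exponents $e_{i,j}\in\{\pm1\}$, by copying its proof with the vector-group result above in place of Proposition~\ref{ti-field-vg}. The one use of Proposition~\ref{twisted} there---a nontrivial relation $\sum_i\epsilon_i a_i^k=0$ for $1\le k\le d$, used to push $x\mapsto\prod_i f_1(a_ix)^{\epsilon_i}$ into a constant $G'$-coset---is now supplied by the elementary identity $\sum_{i=0}^{d+1}(-1)^i\binom{d+1}{i}(1+i)^k=0$ for $1\le k\le d$ (realized using $\binom{d+1}{i}$ copies of each term, with $a_i=1+i\neq 0$). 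With this substitution the remainder of that argument---Lemma~\ref{bracket}, the choice of an $f_1$ whose image in $G/G'$ avoids the relevant subspace off a finite set, the commutator comparison, and the induction on the codimension of the spanned subspace---carries over unchanged, as does the final deduction of Theorem~\ref{easier-Waring-for-fields} by induction on $\dim G$ from these two analogues.

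The content here is verification rather than invention: one must confirm that nonreality entered the proof of Theorem~\ref{ti-field} only through Ellison's theorem, and that its elementary $\pm$-replacement---which produces signed sums in place of honest sums---suffices at each step once inverses of the $f_i$ are allowed. The points requiring a little care are the sign/count bookkeeping in the vector-group case and checking that the new $h_i$ still genuinely enlarge the spanned subspace, which reduces, as in the original, to the nonvanishing of the chosen $a_i$.
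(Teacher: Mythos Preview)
Your proposal is correct and follows essentially the same route as the paper: mimic the proof of Theorem~\ref{ti-field} by establishing signed analogues of Propositions~\ref{twisted}, \ref{ti-field-vg}, and \ref{ti-field-ind}, then induct on $\dim G$. The only difference is presentational: the paper outsources the signed version of Proposition~\ref{twisted} to \cite[Theorem~3.2]{Im}, whereas you reprove it directly by adapting the original argument and you supply an explicit finite-difference identity for the vanishing power sums needed in the induction step; the paper simply says the arguments of Proposition~\ref{ti-field-ind} go through using the signed analogue.
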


\begin{thm}
\label{easier-Waring-ring}

Let $K$ be a number field, $\cO$ its ring of integers, and $\cG$ a closed $\cO$-subscheme of the group scheme
$\cU_k$ of unitriangular $k\times k$ matrices.
If  $\{f_1,\ldots,f_n\}$ is a generating set of $\cO$-morphisms $\A^1\to \cG$, then for some positive integer $M$,
$$\left(\bigcup_{e_1, \ldots, e_n \in \{\pm 1\}}f_1(\cO)^{e_1}\cup\cdots\cup f_n(\cO)^{e_n}\right)^M$$
is a subgroup of bounded index in $\cG(\cO)$.

\end{thm}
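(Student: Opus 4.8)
The plan is to mirror, in the integral setting, the inductive architecture already used for Theorem~\ref{ti-ring}, replacing each use of the totally-imaginary hypothesis by the observation that inverses make that hypothesis unnecessary. Concretely, set $\Xi := \bigcup_{e_1,\ldots,e_n\in\{\pm1\}} f_1(\cO)^{e_1}\cup\cdots\cup f_n(\cO)^{e_n}$ and aim to show that some power $\Xi^M$ is a finite-index subgroup of $\cG(\cO)$. As in \S3, by additivity of Hirsch numbers and Proposition~\ref{fi-ext}, it suffices to handle the commutative case and to verify that the inductive step of Proposition~\ref{ti-field-ind}/Theorem~\ref{ti-ring} still produces generating morphisms $h_i\colon\A^1\to\cG'$ defined over $\cO$, now with the $f_i$ allowed to appear with exponent $\pm1$.

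First I would treat the commutative case, i.e., prove the analogue of Proposition~\ref{ti-ring-comm} for $\cG=\bG_a^m$. Here the key input is that the \emph{easier} Waring problem over $\cO$ is unconditional: for each $d$ there is $N_d$ such that every element of $\cO$ is a sum and difference of at most $N_d$ $d$th powers of elements of $\cO$ (this is classical for $\Z$ \cite{Wright} and follows for general $\cO$ from Siegel together with the difference identity (\ref{alternate}), which already shows $i!\cO\subseteq\cO^{(i)}$; signed representations then reach all of $\cO$, not merely a subring). Combined with Proposition~\ref{twisted}, this lets one solve the linear system (\ref{linear-system}) in $L\cO$ after clearing denominators, exactly as in the proof of Proposition~\ref{ti-ring-comm}, except that each $x$-power may now carry a sign. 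One concludes that a lattice $\Lambda=D\cO^m\subseteq\cG(\cO)$ lies in $\Xi^N$, and the stabilization argument for $X_{i}=X_1+X_{i-1}$ gives a finite-index subgroup inside a bounded power of $\Xi$.

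Next I would push the non-commutative induction through. The construction of $h_1,\ldots,h_m$ in Proposition~\ref{ti-field-ind} uses: Lemma~\ref{bracket} (purely geometric, valid over $K$), Proposition~\ref{twisted} to get $a_1,\ldots,a_N$ with $\sum a_i^k=0$ for $k\le d$ (these can be scaled into $\cO$), and Proposition~\ref{ti-field-vg} to adjust cosets mod $\cG'$ — the last of which I replace by the signed commutative case just established, noting that $\bar\gamma_1$ and $Nv_0$ can be moved into $\Lambda$ by passing to integer multiples exactly as in the proof of Theorem~\ref{ti-ring}. The morphisms $h_i$ are then built from products of $f_{\ell}(a_{i,j}x+b_{i,j})^{\pm1}$ times an element $g_i\in\cG(\cO)$, hence are $\cO$-morphisms $\A^1\to\cG'$, and they are generating for $\cG'$ by the same codimension-of-$W$ induction. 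Applying the inductive hypothesis of Theorem~\ref{easier-Waring-ring} to $(\cG',\{h_i\})$ gives a bounded power of $\bigcup_i h_i(\cO)^{\pm1}$ containing a subgroup of $\cG'(K)$ of Hirsch number $[K:\Q]\dim\cG'$; combined with the commutative case applied to $\cG/\cG'$ and $\{\bar f_i\}$, Proposition~\ref{fi-ext} finishes the proof.

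The main obstacle, as always in this family of arguments, is bookkeeping rather than a genuinely new idea: one must check that every place where the \S2--\S3 proofs invoked ``nonreal'' or ``totally imaginary'' was invoked \emph{only} to apply Proposition~\ref{twisted} or Proposition~\ref{ti-field-vg}/\ref{ti-ring-comm}, and that allowing exponents $\pm1$ repairs precisely those steps — in particular that the semiring $X^d$ of Proposition~\ref{twisted} is replaced by a ring generated by signed twisted powers, which over any characteristic-zero field (or any $\cO$) surjects onto $K^d$ (resp. a finite-index sublattice of $\cO^d$) with no obstruction. One mild subtlety worth spelling out is that, unlike in Theorem~\ref{ti-ring} where ``finite index'' was asserted, here the uniformity is phrased as ``bounded index''; this is automatic because the whole construction depends only on $\cG\hookrightarrow\cU_k$ and the $f_i$ through boundedly many numerical invariants (degrees, the constants $D,L,N_d$), so no separate argument is needed.
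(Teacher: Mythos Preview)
Your outline is essentially the paper's own proof: establish the commutative case (Proposition~\ref{easier-Waring-ring-comm}, via the signed lattice result Proposition~\ref{diag-eqn-for-rings-in-easier-W-S3}), check that the inductive construction of the $h_i$ (Proposition~\ref{induction-step-for-easier-W-for-fields-S3}) descends to $\cO$, and then finish with Proposition~\ref{fi-ext} and additivity of Hirsch numbers, exactly as in Theorem~\ref{ti-ring}.

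Two small slips to fix. First, your parenthetical claim that ``signed representations then reach all of $\cO$'' is false in general: in $\cO=\Z[\sqrt{2}]$ the $\sqrt{2}$-coefficient of any $\Z$-combination of squares is even, so $\sqrt{2}$ is not a signed sum of squares. This does no damage, since you only need a nonzero multiple $\lambda\cO$ (which is precisely what Proposition~\ref{diag-eqn-for-rings-in-easier-W-S3} delivers), and you correctly end up with $\Lambda=D\cO^m$ anyway. Second, whenever you cite Proposition~\ref{twisted} you actually need its signed analogue over an arbitrary characteristic-zero field (Proposition~\ref{diagonal-equations-in-easier-W-S3}); Proposition~\ref{twisted} itself requires $K$ nonreal, and in particular over a formally real field there are no nonzero $a_1,\ldots,a_N$ with $\sum_i a_i^k=0$ for all $1\le k\le d$ once $d\ge 2$, so the construction of $h_{r+1}$ must use exponents $\pm1$ to force the product into a fixed $G'$-coset rather than the unsigned cancellation trick. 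Your final paragraph shows you understand this; just adjust the earlier references. Lastly, ``bounded index'' in the statement simply means finite index for the fixed data $(\cG,f_1,\ldots,f_n)$ --- no uniformity over a family is asserted --- so your closing remark about numerical invariants is unnecessary.
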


The proof of Theorem \ref{easier-Waring-for-fields} depends on variants of Propositions ~\ref{ti-field-vg} and ~\ref{ti-field-ind}. 

\begin{prop}
\label{diagonal-equations-in-easier-W-S3}

If $K$ is a field of characteristic zero and $d$ is a positive integer, there exists an integer $N > 0$ such that $K^d$ can be represented as
\begin{align*}
K^d = \underbrace{(X_1^d + \cdots + X_1^d)}_N - \underbrace{(X_1^d + \cdots + X_1^d)}_N,
\end{align*}
where
\begin{align*}
X_1^d := \{ (x, x^2, \ldots, x^d)\;  | \; x \in K \}.
\end{align*}

\end{prop}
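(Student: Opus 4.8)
The plan is to mimic the proof of Proposition~\ref{twisted}, but to arrange matters so that the obstruction encountered there—namely, that $X^{d}$ could be a proper subsemiring of $K^d$ cut out by a ring endomorphism $\psi\colon K\to K$—is circumvented by the availability of subtraction. Concretely, define $X^d_k := X^d_1+\cdots+X^d_1$ ($k$ times) exactly as before, and for each $k$ set $D^d_k := X^d_k - X^d_k$ (the set of differences), with $D^d := \bigcup_k D^d_k$. Since $X^d_i X^d_j\subseteq X^d_{ij}$ and $X^d_i+X^d_j=X^d_{i+j}$, the set $D^d$ is closed under addition, subtraction, and multiplication, so it is a \emph{subring} of $K^d$ (not merely a subsemiring), and moreover it contains $0$ and is stable under negation by construction. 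The goal is to show $D^d = K^d$ and that the representation length is uniformly bounded.

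I would proceed by induction on $d$, running parallel to Proposition~\ref{twisted}. For $d=1$ the easier Waring problem over a characteristic-zero field (which follows, e.g., from \cite[Theorem 2]{Ellison} together with the identity expressing $i!\alpha$ as an alternating sum of $i$th powers, cf.\ (\ref{alternate}), or directly since $\bG_a(K)$ needs no positivity) gives $D^1 = K$ with bounded length. Assume the result for $d$, so $D^d_N = K^d$ for some $N$. As in Proposition~\ref{twisted}, consider $p^{d+1}\colon X^{d+1}\to X^d$ and $\pi^{d+1}\colon X^{d+1}\to K$. The key point is that, with differences allowed, the argument that forced $p^{d+1}$ to be injective and $X^{d+1}_M$ to stabilize no longer leads to a genuine obstruction: if some power of $D^{d+1}$ under $p^{d+1}$ is all of $K^d$ but $\pi^{d+1}$ does not inject, one produces (as in the original proof) an element $t\in D^{d+1}_{O(N)}\setminus\{0\}$ with $p^{d+1}(t)=0$, i.e.\ of the form $(0,\dots,0,c)$ with $c\ne 0$; combined with $D^1=K$ in the last coordinate (using $\pi^{d+1}(X^{d+1}_N)=K$ from the one-variable easier Waring statement applied to $(d+1)$st powers) and the induction hypothesis on the first $d$ coordinates, this gives $D^{d+1}_{O(N)} = K^{d+1}$. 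In the remaining case $p^{d+1}$ is a ring isomorphism onto $K^d$, so $\phi := \pi^{d+1}\circ(p^{d+1})^{-1}$ is a \emph{ring homomorphism} $K^d\to K$; exactly as before it factors through a coordinate projection and yields a ring endomorphism $\psi\colon K\to K$ with $\psi(2^i) = 2^{d+1}$ for the point $(2,4,\dots,2^{d+1})\in X^{d+1}\subseteq D^{d+1}$, which is absurd. Hence $D^{d+1} = K^{d+1}$ with bounded length, completing the induction.

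Finally I would translate this back into the asserted form. The statement $D^d = K^d$ with each element a difference of two sums of $N$ elements of $X^d_1$ is literally the displayed equation $K^d = (X^d_1+\cdots+X^d_1)_N - (X^d_1+\cdots+X^d_1)_N$, after absorbing the various constants produced along the induction into a single $N$ (taking the maximum over $d' \le d$ and using $X^d_i\subseteq X^d_j$ for $i\le j$ to pad shorter representations with copies of the zero vector $(0,\dots,0)=X^d_1$-image of $x=0$).

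\textbf{Main obstacle.} The delicate step is verifying that every place where the proof of Proposition~\ref{twisted} exploited the semiring structure to derive a contradiction \emph{still} produces either a contradiction or the desired surjectivity once differences are permitted—in particular, checking that allowing subtraction genuinely kills only the ``honest'' failure modes (non-injective $p^{d+1}$, non-stabilizing $X^{d+1}_M$) while leaving intact the final $\psi(2^i)=2^{d+1}$ contradiction, which is purely multiplicative and hence survives. A secondary bookkeeping nuisance is keeping the number of summands uniformly bounded through the induction, since each inductive step inflates $N$ by a polynomial factor; but this is routine, as $d$ is fixed.
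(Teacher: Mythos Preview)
The paper gives no proof at all here: it simply cites \cite[Theorem~3.2]{Im}. Your proposal, by contrast, supplies a self-contained argument by adapting the proof of Proposition~\ref{twisted} with differences $D^d_k := X^d_k - X^d_k$ in place of the semiring $X^d_k$. This is a genuinely different route from the paper's (which defers entirely to an external reference), and your overall strategy is sound: $D^d$ is an honest unital subring of $K^d$, the induction hypothesis makes $p^{d+1}\colon D^{d+1}\to K^d$ surjective, and the dichotomy ``$p^{d+1}$ injective $\Rightarrow$ ring-endomorphism contradiction; $p^{d+1}$ non-injective $\Rightarrow$ nonzero kernel element $t=(0,\dots,0,c)$'' goes through exactly as in the nonreal case, since the final step $\psi(2^i)=2^{d+1}$ is purely multiplicative.

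That said, there are a few slips you should clean up. First, ``$\pi^{d+1}$ does not inject'' is a typo for $p^{d+1}$ (your conclusion $p^{d+1}(t)=0$ makes this clear). Second, and more substantively, you write $\pi^{d+1}(X^{d+1}_N)=K$; this is false for general $K$ (e.g.\ $K=\Q$, $d+1=2$: sums of squares are not all of $\Q$). What you need---and what is true over any characteristic-zero field via the identity~(\ref{alternate}) and the invertibility of $(d+1)!$ in $K$---is $\pi^{d+1}(D^{d+1}_M)=K$ for some $M$. Third, the claimed bound $t\in D^{d+1}_{O(N)}$ is not justified by ``as in the original proof'': the original argument produced $t$ from the stabilisation dichotomy, whereas your cleaner version just uses non-injectivity of $p^{d+1}$ on $D^{d+1}$, which gives $t$ in \emph{some} $D^{d+1}_L$ with no a priori control on $L$. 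This is harmless for the proposition as stated (the final $N$ may depend on $K$), but if you want an explicit bound depending only on $d$, you can construct $t$ directly: solve $\sum_{j=1}^{d+1} a_j j^i = [i=d{+}1]$ over $\Q$ (Vandermonde), clear denominators, and read off $t=(0,\dots,0,c)\in D^{d+1}_L$ with $L$ depending only on $d$. Finally, the $d=1$ base case is trivial ($X^1_1=K$ already); no appeal to Ellison is needed there.
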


\begin{proof}

This is \cite[Theorem~3.2]{Im}.
\end{proof}

\begin{prop}
\label{easier-W-for-fields-vg}

Theorem \ref{easier-Waring-for-fields} holds when $G$ is a vector group.

\end{prop}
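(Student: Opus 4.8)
The plan is to mimic the proof of Proposition~\ref{ti-field-vg}, replacing the use of Proposition~\ref{twisted} by the use of Proposition~\ref{diagonal-equations-in-easier-W-S3}. Writing each $f_j(x) = (P_{1j}(x),\ldots,P_{mj}(x))$ with $P_{ij}(x) = \sum_{k=0}^d a_{ijk}x^k$ as in \eqref{P-vector} and \eqref{P-formula}, and given a target vector $(c_1,\ldots,c_m)\in K^m$, the goal is to solve the system \eqref{system} allowing each monomial $x_{j\ell}^k$ to appear with a sign $e_{j\ell}\in\{\pm1\}$. Concretely, by Proposition~\ref{diagonal-equations-in-easier-W-S3}, by choosing suitable $x_{j\ell}\in K$ and signs, we can prescribe the power sums $y_{jk} = \sum_\ell \pm x_{j\ell}^k$ freely for $1\le j\le n$, $1\le k\le d$, while the contribution of the degree-zero terms is a fixed combination $N'\sum_j a_{ij0}$ for some fixed integer $N'$ (the number of summands, counted with the constraint that for the easier problem we use $2N$ summands of which $N$ carry a plus sign and $N$ carry a minus sign, so the constant terms contribute $0$ if we arrange the signs in balanced pairs — alternatively one simply absorbs this into the right-hand side exactly as in \eqref{linear-system}).

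The system \eqref{system} then reduces, exactly as in \eqref{linear-system}, to the linear system
\begin{align*}
\sum_{k=1}^d\sum_{j=1}^n a_{ijk} y_{jk} = c_i - N'\sum_{j=1}^n a_{ij0},\quad i=1,\ldots,m,
\end{align*}
in the free variables $y_{jk}$. This is solvable over $K$ unless there is a nonzero $(b_1,\ldots,b_m)$ with $\sum_i b_i a_{ijk} = 0$ for all $j$ and all $k\ge1$; as in the proof of Proposition~\ref{ti-field-vg}, this forces $\sum_i b_i P_{ij}$ to be constant for every $j$, i.e.\ the linear functional $\pi(t_1,\ldots,t_m) := \sum_i b_i t_i$ composed with every $f_j$ is constant, contradicting the hypothesis that $\{f_1,\ldots,f_n\}$ is generating (by the criterion following Proposition~\ref{criteria} specialized to vector groups). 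Hence the system is solvable, and tracking the number of summands used shows that a bounded product (here, a bounded signed sum, since $G$ is a vector group) of values $f_j(K)^{\pm1}$ realizes any given element of $G(K)$.

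The only genuinely new ingredient compared with Proposition~\ref{ti-field-vg} is Proposition~\ref{diagonal-equations-in-easier-W-S3}, which supplies, over an \emph{arbitrary} characteristic-zero field, the freedom to prescribe the power sums $y_{jk}$ — this is where the hypothesis that $K$ be nonreal is traded for the use of inverses (signs). I expect the main point requiring care to be bookkeeping: making sure the number of plus-summands and minus-summands is uniformly bounded independent of the target vector, and that the degree-zero terms are handled consistently (either by balancing signs or by moving them to the right-hand side), so that the final count $M$ depends only on $d$, $m$, $n$, and the $f_j$, not on $(c_1,\ldots,c_m)$. None of this is difficult; it is essentially a transcription of the earlier argument with ``sum'' replaced by ``signed sum'' and Proposition~\ref{twisted} replaced by Proposition~\ref{diagonal-equations-in-easier-W-S3}.
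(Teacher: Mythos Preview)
Your proposal is correct and follows essentially the same route as the paper: reduce to the linear system in the free power-sum variables $y_{jk}$ via Proposition~\ref{diagonal-equations-in-easier-W-S3}, then invoke the generating hypothesis to rule out a nontrivial linear relation among the rows. The only cosmetic difference is that the paper commits from the outset to balanced signs ($N$ plus and $N$ minus for each $j$), so that $y_{j0}=0$ exactly and the right-hand side is simply $c_i$ with no $N'\sum_j a_{ij0}$ term; your hedging between ``balanced signs'' and ``absorb into the right-hand side'' is unnecessary, since the balanced choice is forced by the shape of Proposition~\ref{diagonal-equations-in-easier-W-S3} anyway.
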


\begin{proof}

Let 
\begin{equation}
\label{f-j-S3}
f_j(x) = (P_{1j}(x),\ldots,P_{mj}(x)),
\end{equation}
where $d$ is the maximum of the degrees of the $P_{ij}$ for $1\le i\le m$ and $1\le j \le n$.  Let $N$ be chosen as in Proposition \ref{diagonal-equations-in-easier-W-S3}.  Writing 
\begin{equation}
\label{P-formula-S3}
P_{ij}(x) = \sum_{k = 0}^d a_{ijk}x^k
\end{equation}
for $1\le i\le m$ and $1\le j \le n$. Given $(c_1, \ldots, c_m)$, our goal is to find suitable $\epsilon_{j \ell} \in \{\pm 1\}$ and $x_{j\ell} \in K$ such that
\begin{align*}
(c_1, \ldots, c_m) = \sum_{\ell = 1}^{2N} \sum_{j = 1}^n \epsilon_{j\ell} f_j(x_{j\ell}).
\end{align*}
In light of Proposition \ref{diagonal-equations-in-easier-W-S3}, for each $1 \le j \le n$, one can let $\epsilon_{j \ell} = 1$ if $1 \le \ell \le N$, and let $\epsilon_{j \ell} = - 1$ if $N + 1 \le \ell \le 2N$. Thus the above system is equivalent to the system of equations
\begin{align}
\label{system-easier-W-for-fields-S3}
c_i = \sum_{j = 1}^n \sum_{k = 0}^d a_{ijk}\left(\sum_{\ell = 1}^Nx_{j\ell}^k - \sum_{\ell = N + 1}^{2N}x_{j\ell}^k\right), \; i = 1, \ldots, m.
\end{align}

By Proposition ~\ref{diagonal-equations-in-easier-W-S3}, by choosing $x_{j\ell} \in K$ suitably, we can choose the values
\begin{align*}
y_{jk} = \sum_{\ell = 1}^Nx_{j\ell}^k - \sum_{\ell = N + 1}^{2N}x_{j\ell}^k
\end{align*}
independently for $1 \le j \le n$ and $1 \le k \le d$, while $y_{j0} = 0$ by definition. Thus we can rewrite the system of equations (\ref{system-easier-W-for-fields-S3}) as
\begin{align}
\label{linear-system-easier-W-for-fields-S3}
\sum_{j = 1}^n \sum_{k = 1}^d a_{ijk}y_{jk} = c_i, \; \; i = 1, \ldots, m.
\end{align}

Arguing as in the proof of Proposition ~\ref{ti-field-vg}, we see that the above system of equations is always solvable unless $f_j$ is constant modulo some proper subspace $V$ of $\A^m$ for all $1 \le j \le n$, i.e., each
$\pi \circ f_j$ is constant,
where $\pi : \A^m \to \A^m/V$ is the canonical projection. This is impossible since the set of morphisms $\{f_1, \ldots, f_n\}$ is  generating.

\end{proof}

\begin{prop}
\label{induction-step-for-easier-W-for-fields-S3}

Under the hypotheses of Theorem \ref{easier-Waring-for-fields}, there exists an integer $m$, a sequence of elements $g_1,\ldots,g_m\in G(K)$, a sequence of positive integers $k_1,\ldots,k_m$,
for each $i\in \{1,\ldots,m\}$, a sequence of integers $\ell_{i,1},\ldots,\ell_{i,k_i}\in [1,n]$, a sequence of integers $e_{i, 1}, \ldots, e_{i, k_i} \in \{\pm 1\}$, and sequences of elements $a_{i,1},b_{i,1},\ldots,a_{i,k_i},b_{i,k_i}\in K$, such that for each $i\in \{1,\ldots,m\}$, the $K$-morphisms $h_1,\ldots,h_m\colon \A^1\to G$ defined by
$$h_i(x) := g_if_{\ell_{i,1}}(a_{i,1}x+b_{i,1})^{e_{i, 1}}\cdots f_{\ell_{i,k_i}}(a_{i,k_i}x+b_{i,k_i})^{e_{i, k_i}}$$
map $\A^1\to G'$ and as morphisms to $G'$ are  generating.

\end{prop}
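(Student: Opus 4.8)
The plan is to transcribe the proof of Proposition~\ref{ti-field-ind} almost word for word, making exactly two substitutions: Proposition~\ref{easier-W-for-fields-vg} in place of Proposition~\ref{ti-field-vg}, so that every element of a vector group is now a bounded product of values $f_i(\,\cdot\,)^{\pm1}$ rather than of values $f_i(\,\cdot\,)$; and Proposition~\ref{diagonal-equations-in-easier-W-S3} in place of Proposition~\ref{twisted}, so that one has sign-twisted vanishing identities $\sum_\ell \epsilon_\ell a_\ell^k = 0$ with $\epsilon_\ell\in\{\pm1\}$. As in Proposition~\ref{ti-field-ind}, I would construct $h_1,h_2,\dots$ one at a time and induct on the codimension in $G'/G''$ of the subspace $W_r$ spanned by $\{\bar h_i(t)-\bar h_i(0) : 1\le i\le r,\ t\in K\}$, where $\bar h_i$ is the composite of $h_i$ with $G'\to G'/G''$. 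If $W_r = G'/G''$ the criterion (the unipotent generating criterion applied to $G'$) for a family of morphisms $\A^1\to G'$ to be generating is met and we stop; otherwise $W:=W_r$ is a proper subspace and the task is to produce $h_{r+1}\colon\A^1\to G'$ with $\bar h_{r+1}(t)-\bar h_{r+1}(0)\notin W$ for some $t$.

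For the inductive step I would apply Lemma~\ref{bracket} to the proper subspace $W\subsetneq G'/G''$ to obtain $\gamma_1\in G(K)$ and a proper subspace $V\subsetneq G/G'$ with $[\gamma_1,\gamma_2]\notin WG''(K)$ whenever $\gamma_2\notin VG'(K)$; since $\{f_1,\dots,f_n\}$ is generating, after relabelling I may assume that $\bar f_1(x)\notin V$ for all but finitely many $x\in K$, bars now denoting composition with $G\to G/G'$. Using Proposition~\ref{easier-W-for-fields-vg}, write $\bar\gamma_1=\sum_j e_j\bar f_{s_j}(b_j)$ with $e_j\in\{\pm1\}$ and set $g:=\prod_j f_{s_j}(b_j)^{e_j}$, so $\bar g=\bar\gamma_1$. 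Writing $\bar f_1(x)=v_0+xv_1+\dots+x^dv_d$ and applying Proposition~\ref{diagonal-equations-in-easier-W-S3} with $d$ replaced by $d+1$ to the vector $(0,\dots,0,1)$ (then padding $N$ and relabelling so that $a_1\neq 0$), I obtain $a_1,\dots,a_{2N}\in K$ such that, with $\epsilon_\ell=1$ for $\ell\le N$ and $\epsilon_\ell=-1$ for $\ell>N$, one has $\sum_\ell\epsilon_\ell a_\ell^k=0$ for $1\le k\le d$; hence $\sum_\ell\epsilon_\ell\bar f_1(a_\ell x)=(\sum_\ell\epsilon_\ell)v_0=0$, independent of $x$. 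Finally, using Proposition~\ref{easier-W-for-fields-vg} once more, I would pick $g_{r+1}\in G(K)$, a bounded product of values $f_i(\,\cdot\,)^{\pm1}$, with $\bar g_{r+1}=-\bar\gamma_1$, and take $h_{r+1}(x)$ to be either
\[
g_{r+1}\,g\,f_1(a_1x)\prod_{\ell=2}^{2N}f_1(a_\ell x)^{\epsilon_\ell}
\qquad\text{or}\qquad
g_{r+1}\,f_1(a_1x)\,g\prod_{\ell=2}^{2N}f_1(a_\ell x)^{\epsilon_\ell}.
\]
Both words have the required shape, and their $G/G'$-image is identically zero, so each maps $\A^1$ into $G'$. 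Their quotient is $g_{r+1}[f_1(a_1x),g]g_{r+1}^{-1}$, and, as in Proposition~\ref{ti-field-ind}, the fact that this commutator lies in $WG''(K)$ for only finitely many $x$ forces at least one of the two words to be non-constant modulo $WG''$, so $W$ is strictly enlarged. After finitely many steps we reach $W=G'/G''$, and the resulting $h_1,\dots,h_m$ generate $G'$.

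Because $G/G'$ is a vector group, inserting the exponents $e_j,\epsilon_\ell\in\{\pm1\}$ changes nothing at the level of $G/G'$ (where $f_i(x)^{-1}$ simply maps to $-\bar f_i(x)$), so apart from invoking Propositions~\ref{easier-W-for-fields-vg} and \ref{diagonal-equations-in-easier-W-S3} in place of Propositions~\ref{ti-field-vg} and \ref{twisted}, the argument is a faithful transcription of the proof of Proposition~\ref{ti-field-ind}. The one genuinely delicate point is the same as there: showing that $[f_1(a_1x),g]$ — equivalently its conjugate by $g_{r+1}$, which differs from it only by an element of $[G,G']$ — escapes $WG''(K)$ for all but finitely many $x$. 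This is where Lemma~\ref{bracket} enters, together with the bookkeeping of commutators modulo $G''$ (one notes that $[f_1(a_1x),g]$ and $[f_1(a_1x),\gamma_1]$ agree modulo $[G,G']/G''$ inside $G'/G''$, so it suffices to have fed Lemma~\ref{bracket} a subspace containing $[G,G']/G''$). I expect this commutator-genericity step to be the main obstacle; everything else is routine.
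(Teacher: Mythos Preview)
Your proposal is correct and takes the same approach as the paper, whose own proof is the single sentence ``Using Proposition~\ref{easier-W-for-fields-vg}, and the same arguments as in Proposition~\ref{ti-field-ind}, Proposition~\ref{induction-step-for-easier-W-for-fields-S3} follows immediately.'' Your explicit substitution of Proposition~\ref{diagonal-equations-in-easier-W-S3} for Proposition~\ref{twisted} (which is indeed necessary, since Proposition~\ref{twisted} requires $K$ to be nonreal) is exactly what the paper intends, and the commutator-genericity concern you flag at the end is inherited verbatim from the proof of Proposition~\ref{ti-field-ind} rather than being a new obstacle introduced by the easier-Waring setting.
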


\begin{proof}

Using Proposition \ref{easier-W-for-fields-vg}, and the same arguments as in Proposition ~\ref{ti-field-ind}, Proposition \ref{induction-step-for-easier-W-for-fields-S3} follows immediately.

\end{proof}

\begin{proof}[Proof of Theorem ~\ref{easier-Waring-for-fields}]

The proof of Theorem ~\ref{easier-Waring-for-fields} is the same as that of Theorem ~\ref{ti-field}. Using Propositions \ref{easier-W-for-fields-vg} and \ref{induction-step-for-easier-W-for-fields-S3}, we proceed as in the proof of Theorem ~\ref{ti-field}, using induction on $\dim(G)$.  Theorem ~\ref{easier-Waring-for-fields} follows immediately.

\end{proof}

Next, we prove an integral variant of Proposition \ref{diagonal-equations-in-easier-W-S3}, in greater generality than we need for Theorem~\ref{easier-Waring-ring}:

\begin{prop}
\label{diag-eqn-for-rings-in-easier-W-S3}

Let $\cO$ be any integral domain whose quotient field $K$ is of characteristic zero. For all positive integers $d$, there exist $\lambda\in \cO\setminus \{0\}$ and $N\in \Z_{>0}$ such that
\begin{align*}
\lambda\cO^d \subseteq \underbrace{(Y_1^d + \cdots + Y_1^d)}_N -  \underbrace{(Y_1^d + \cdots + Y_1^d)}_N,
\end{align*}
where
\begin{align*}
Y_1^d = \{(x, x^2, \ldots, x^d) \; | \; x \in \cO\}.
\end{align*}

\end{prop}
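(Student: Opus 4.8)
The plan is to bootstrap from the field statement, Proposition~\ref{diagonal-equations-in-easier-W-S3}, by clearing denominators, using the same ``internal rescaling'' trick that underlies the proof of Proposition~\ref{ti-ring-comm}: substituting $\gamma x$ for $x$ in the parametrization $x\mapsto(x,x^2,\dots,x^d)$ of $Y_1^d$ multiplies the $j$-th coordinate by $\gamma^j$, so applying this to a representation of a vector supported in a single coordinate produces, for free, all $\cO$-multiples of that vector with that coordinate raised to the corresponding power. Throughout, for $S\subseteq K^d$ and $k\in\Z_{>0}$ I write $kS$ for the $k$-fold sumset $S+\cdots+S$; note $(kS-kS)+(\ell S-\ell S)\subseteq (k+\ell)S-(k+\ell)S$.

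First I would fix, for each $i\in\{1,\dots,d\}$, a representation
$$e_i=\sum_{\ell=1}^{N_0}\bigl(u_{i\ell},u_{i\ell}^2,\dots,u_{i\ell}^d\bigr)-\sum_{\ell=1}^{N_0}\bigl(v_{i\ell},v_{i\ell}^2,\dots,v_{i\ell}^d\bigr)$$
of the standard basis vector $e_i\in K^d$ with $u_{i\ell},v_{i\ell}\in K$, which exists by Proposition~\ref{diagonal-equations-in-easier-W-S3} ($N_0$ being the integer $N$ produced there). Since these are finitely many elements of $K=\operatorname{Frac}(\cO)$, I can pick a single $D\in\cO\setminus\{0\}$ with $Du_{i\ell},Dv_{i\ell}\in\cO$ for all $i,\ell$. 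Replacing each variable by $\gamma$ times itself, for arbitrary $\gamma\in\cO$, and comparing coordinates shows
$$\gamma^iD^i\,e_i=\sum_{\ell=1}^{N_0}\bigl((\gamma Du_{i\ell}),\dots,(\gamma Du_{i\ell})^d\bigr)-\sum_{\ell=1}^{N_0}\bigl((\gamma Dv_{i\ell}),\dots,(\gamma Dv_{i\ell})^d\bigr)\in N_0Y_1^d-N_0Y_1^d,$$
with $N_0$ independent of $\gamma$ and $i$; interchanging the two sums gives $-\gamma^iD^i e_i\in N_0Y_1^d-N_0Y_1^d$ as well.

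Next I would invoke the finite-difference identity (\ref{alternate}) — subtracting its specialization at $\alpha=0$ to absorb the constant term — to write, for every $\alpha\in\cO$, the element $i!\,\alpha$ as a $\Z$-linear combination, with coefficients and number of summands $R_i$ depending only on $i$, of $i$-th powers of elements of $\cO$. Substituting these $i$-th powers for $\gamma^i$ above yields $i!\,D^i\,\alpha\,e_i\in R_iN_0Y_1^d-R_iN_0Y_1^d$ for every $\alpha\in\cO$, i.e. $i!\,D^i\,\cO\,e_i\subseteq R_iN_0Y_1^d-R_iN_0Y_1^d$. Now put $\lambda:=d!\,D^d\in\cO\setminus\{0\}$; for each $i\le d$ one has $\lambda=(i!\,D^i)\cdot\bigl((d!/i!)D^{d-i}\bigr)$ with $(d!/i!)D^{d-i}\in\cO$, so $\lambda\,\cO\,e_i\subseteq R_iN_0Y_1^d-R_iN_0Y_1^d$. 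Finally, writing an arbitrary element of $\lambda\cO^d$ as $\sum_{i=1}^d\lambda c_i e_i$ with $c_i\in\cO$ and summing, additivity of the difference sets gives $\lambda\cO^d\subseteq NY_1^d-NY_1^d$ with $N:=N_0\sum_{i=1}^dR_i$, which is the claim.

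The one thing to be careful about is that the difference set $NY_1^d-NY_1^d$ is merely a $\Z$-submodule of $\cO^d$: it is not closed under multiplication by elements of $\cO$, nor even under coordinatewise scaling by integers, so one genuinely cannot rescale a representation of $e_i$ by $c_i$ from the outside. The argument sidesteps this by doing all scaling \emph{inside} the parametrization of $Y_1^d$, where it is innocuous precisely because the target vectors $e_i$ have a single nonzero coordinate; the finite-difference identity is then exactly what converts the $i$-th powers arising that way into arbitrary $\cO$-multiples, at the cost of the fixed factor $i!$. I expect this interplay to be the only real content; the remaining bookkeeping (common denominators, counting terms, additivity of sumsets) is routine. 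Note that, unlike in the proof of Proposition~\ref{ti-ring-comm}, no appeal to Siegel's theorem is needed, because permitting differences lets the finite-difference identity do the job over an arbitrary domain.
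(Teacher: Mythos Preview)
Your argument is correct and essentially identical to the paper's: represent each $e_i$ over $K$ via Proposition~\ref{diagonal-equations-in-easier-W-S3}, clear denominators, rescale inside the parametrization of $Y_1^d$ to obtain all $\gamma^i D^i e_i$, and then apply the finite-difference identity~(\ref{alternate}) to convert $i$-th powers into arbitrary $\cO$-multiples at the cost of the factor $i!$. One terminological slip in your closing paragraph: for fixed $N$, the set $NY_1^d - NY_1^d$ is not a $\Z$-submodule (it is not closed under addition), as your own sumset bookkeeping $(kS-kS)+(\ell S-\ell S)\subseteq(k+\ell)S-(k+\ell)S$ already correctly reflects; this does not affect the proof.
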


\begin{proof}

For each integer $k > 0$, set
\begin{align*}
Y_{k, k}^d = \underbrace{(Y_1^d + \cdots + Y_1^d)}_k -  \underbrace{(Y_1^d + \cdots + Y_1^d)}_k.
\end{align*}

Choose $N > 0$ as in Proposition \ref{diagonal-equations-in-easier-W-S3}. For each $1 \le m \le d$, the basis vector $e_m$ can be written in the form
\begin{align*}
e_m = \sum_{j = 1}^N(x_j, x_j^2, \ldots, x_j^d) - \sum_{j = 1}^N(y_j, y_j^2, \ldots, y_j^d)
\end{align*}
for some $x_j, y_j \in K$. Replacing each $x_j, y_j$ in the above representation by $\delta x_j, \delta y_j$ for some non-zero $\delta\in \cO$, it follows that there exists a non-zero $\kappa_m\in \cO$ such that
\begin{align}
\label{eqn1-diag-eqn-for-rings-in-easier-W-S3}
\kappa_me_m \in Y_{N, N}^d.
\end{align}

For each $\alpha \in \cO$, we apply (\ref{alternate}) to prove that
\begin{align*}
m!\kappa_m\cO e_m \subseteq Y_{2NN_m, 2NN_m}^d.
\end{align*}

Let $\lambda :=  d!\prod_{i=1}^d \kappa_i$.   Replacing $N$ by $2N(N1 + \cdots + N_d)$, we deduce that
\begin{align*}
\lambda\cO^d \subseteq Y_{N, N}^d.
\end{align*}

\end{proof}

The next result is a variant of Proposition \ref{ti-ring-comm}.

\begin{prop}
\label{easier-Waring-ring-comm}

Theorem~\ref{easier-Waring-ring} holds if $\cG$ is commutative.

\end{prop}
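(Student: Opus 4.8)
The plan is to mirror the proof of Proposition~\ref{ti-ring-comm}, replacing each use of Proposition~\ref{twisted} with its ``easier'' counterpart Proposition~\ref{diag-eqn-for-rings-in-easier-W-S3}. First I would restrict each $f_j$ to the generic fiber and write it as a vector of polynomials as in (\ref{P-vector}), with coefficients $a_{ijk}$ as in (\ref{P-formula}). The key point is that for a suitable $\lambda\in\cO\setminus\{0\}$ and integer $N$, the lattice $\lambda\cO^d$ is contained in $Y_{N,N}^d$, the set of $N$-fold signed sums of elements of $Y_1^d = \{(x,x^2,\ldots,x^d)\mid x\in\cO\}$; this is exactly Proposition~\ref{diag-eqn-for-rings-in-easier-W-S3}. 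Consequently, given a target $(c_1,\ldots,c_m)$, one can choose the power sums $y_{jk}=\sum_{\ell=1}^N x_{j\ell}^k-\sum_{\ell=N+1}^{2N}x_{j\ell}^k$ (with each $x_{j\ell}\in\cO$) to take any prescribed values in $\lambda\cO$ independently for $1\le j\le n$, $1\le k\le d$, while $y_{j0}=0$.

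The resulting linear system in the $y_{jk}$ is the analogue of (\ref{linear-system-easier-W-for-fields-S3}), and since $\{f_1,\ldots,f_n\}$ is generating, there is no nontrivial linear relation among the forms $\sum_{j,k\ge1}a_{ijk}y_{jk}$; hence the system is solvable over $K$ for any right-hand side, and therefore solvable with $y_{jk}\in\lambda\cO$ whenever the right-hand side $(c_1,\ldots,c_m)$ is sufficiently divisible — say divisible by some fixed $D\in\Z_{>0}$. Thus there is an integer $N'$ (a bounded multiple of $N$) so that every element of $\Lambda:=D\cO^m$ is a signed sum of at most $N'$ elements of $\bigcup_{e_j\in\{\pm1\}}\big(f_1(\cO)^{e_1}\cup\cdots\cup f_n(\cO)^{e_n}\big)$. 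Since $\cG(\cO)$ has a finite filtration with finitely generated free abelian quotients and $\Lambda\subseteq\cG(\cO)\subseteq\cG(K)\cong K^m$, the lattice $\Lambda$ has finite index in $\cG(\cO)$.

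It remains to promote ``$\Lambda$ lies in a bounded signed sum'' to ``a bounded signed sum is a finite index subgroup.'' Setting
$$X_i := \underbrace{\Big(\bigcup_{e_j\in\{\pm1\}}(f_1(\cO)^{e_1}\cup\cdots\cup f_n(\cO)^{e_n})\Big)+\cdots+\Big(\bigcup_{e_j\in\{\pm1\}}(f_1(\cO)^{e_1}\cup\cdots\cup f_n(\cO)^{e_n})\Big)}_{iN'},$$
(written additively, since $\cG$ is commutative), we have $\Lambda\subseteq X_i\subseteq\cG(\cO)$ for all $i\ge1$ and $X_{i+1}=X_1+X_i$; note $0\in X_1$ because each $f_j(x)f_j(x)^{-1}=0$ is attainable. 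Hence $X_{i+1}$ contains every $\Lambda$-coset of $\cG(\cO)$ met by $X_i$, so the sequence $X_1\subseteq X_2\subseteq\cdots$ stabilizes (there are finitely many $\Lambda$-cosets) to a subset closed under addition and containing $0$; being a finite union of cosets of the lattice $\Lambda$, this stable set is a subgroup of $\cG(\cO)$ of finite index. Taking $M$ to be the index at which stabilization occurs gives the claim.

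I expect no serious obstacle here: the only genuinely nonelementary inputs (Siegel's theorem over number rings, the easier-Waring diagonal representation) are already packaged in Propositions~\ref{diagonal-equations-in-easier-W-S3} and~\ref{diag-eqn-for-rings-in-easier-W-S3}, and everything else is the same bookkeeping as in Proposition~\ref{ti-ring-comm}. The one point requiring a little care is keeping track of the divisibility constant $D$ through the linear-algebra step so that solvability over $K$ yields solvability over $\lambda\cO$; but this is exactly the argument already used in the proof of Proposition~\ref{ti-ring-comm}, now with signed sums in place of ordinary sums.
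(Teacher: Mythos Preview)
Your proposal is correct and follows essentially the same route as the paper: invoke Proposition~\ref{diag-eqn-for-rings-in-easier-W-S3} to hit $\lambda\cO^d$ by signed power-sum vectors, reduce to the linear system \eqref{linear-system-easier-W-for-fields-S3} (with the constant term vanishing since $y_{j0}=0$), use the generating hypothesis to solve it over $K$ and hence over $\lambda\cO$ for sufficiently divisible $c_i$, set $\Lambda=D\cO^m$, and then run the stabilizing-chain argument $X_1\subseteq X_2\subseteq\cdots$ exactly as in Proposition~\ref{ti-ring-comm}. One minor remark: Siegel's theorem is not actually needed in the easier variant---Proposition~\ref{diag-eqn-for-rings-in-easier-W-S3} relies only on the finite-difference identity \eqref{alternate}, which already produces signed sums of $m$th powers.
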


\begin{proof}

Let $\lambda, N$ be chosen as in Proposition \ref{diag-eqn-for-rings-in-easier-W-S3}. Restricting $f_j$ to the generic fiber, we can write it as a vector of polynomials \eqref{f-j-S3} with the $P_{ij}$ given by \eqref{P-formula-S3}. We can solve the system \eqref{system-easier-W-for-fields-S3}  of equations whenever we can solve the system \eqref{linear-system-easier-W-for-fields-S3} in $y_{jk} \in \lambda\cO$. This system is always solvable in $K$; so it is solvable in $\lambda\cO$ whenever the $c_i$ are sufficiently divisible. Thus there exists an integer $D$ such that if the $c_i$ are divisible by $D$ and $N$ is sufficiently large, then $(c_1, \ldots, c_m)$ is a sum of $N$ terms, each of which belongs to $\bigcup_{e_1, \ldots, e_n \in \{\pm 1\}} (e_1f_1(\cO) \cup \cdots \cup e_nf_n(\cO))$. Let  $\Lambda := D\cO^m$. 

Set
\begin{align*}
U = \bigcup_{e_1, \ldots, e_n \in \{\pm 1\}} (e_1f_1(\cO) \cup \cdots \cup e_nf_n(\cO)).
\end{align*}
For each $i \ge 1$, define
\begin{align*}
X_i = \underbrace{U + \cdots + U}_{\text{$iN$ copies of $U$}}.
\end{align*}

We have $\Lambda \subseteq X_i \subseteq \cG(\cO) \subset \cG(K) \cong K^m$ for all $i \ge 1$, and $X_{i + 1} = X_1 + X_i$. Using the same arguments as in the proof of Proposition \ref{ti-ring-comm}, $\Lambda$ is a subgroup of finite index in $\cG(\cO)$, and therefore the sequence $X_1, X_2, \ldots$ stabilizes to a subgroup of $\cG(\cO)$ of rank $m[K : \Q]$, and of finite index in $\cG(\cO)$.

\end{proof}

We now prove Theorem \ref{easier-Waring-ring}.

\begin{proof}[Proof of Theorem \ref{easier-Waring-ring}]

We first observe that Proposition \ref{induction-step-for-easier-W-for-fields-S3} remains true over $\cO$; more precisely,  assuming that the morphisms $f_i$ are defined over $\cO$,  the elements $g_i$ can be taken to be in $\cG(\cO)$ and $a_{i,j}, b_{i,j}\in \cO$, so the morphisms $h_i$ are defined over $\cO$.

Now we proceed as in the proof of Theorem \ref{easier-Waring-for-fields}, using induction on $\dim G$.  By the induction hypothesis, there exists an integer $N > 0$ such that $(\bigcup_{e_1, \ldots, e_n \in \{\pm 1\}} (h_1(\cO)^{e_1} \cup \cdots \cup h_n(\cO)^{e_n}))^N$ is a subgroup of
$G'(\cO)$ of Hirsch number $[K:\Q]\dim G'$.  On the other hand, by Proposition~\ref{easier-Waring-ring-comm}, there exists a bounded power of $\bigcup_{e_1, \ldots, e_n \in \{\pm 1\}} (\bar f_1(\cO)^{e_1} \cup \cdots \cup \bar f_n(\cO)^{e_n})$ which is a subgroup of
$(G/G')(\cO)$ of Hirsch number $[K:\Q]\dim G/G'$. Here for each $1 \le i \le n$, $\bar f_i(x)$ denotes the composition of $f_i(x)$ with the quotient map $G\to G/G'$. The theorem follows by Proposition~\ref{fi-ext}  and the additivity of Hirsch numbers.

\end{proof}

\end{document}